\newtheorem{theo}{Theorem}[section]
\newtheorem{prop}[theo]{Proposition}
\newtheorem{claim}[theo]{Claim}
\newtheorem{lemm}[theo]{Lemma}
\newtheorem{sublemm}[theo]{Sublemma}
\newtheorem{coro}[theo]{Corollary}
\newtheorem{rema}[theo]{Remark}
\newtheorem{Defi}[theo]{Definition}
\newtheorem{question}[theo]{Question}
\title{Geometric  representability  of $1$-cycles on rationally connected threefolds}
\author{Claire Voisin\footnote{The author is supported by the ERC Synergy Grant HyperK (Grant agreement No. 854361).}}
\date{}
\newfont{\gothic}{eufb10}
\begin{document}

\maketitle
\setcounter{section}{-1}
\begin{flushright} {\it \`A la m\'{e}moire d'Alberto Collino}
\end{flushright}

\begin{abstract} We prove that for any rationally connected threefold $X$, there exist a smooth projective surface $S$ and a family of $1$-cycles on $X$ parameterized by $S$, inducing an Abel-Jacobi  isomorphism ${\rm Alb}(S)\cong J^3(X)$. This statement was  previously known for some classes of smooth   Fano threefolds. We prove a similar result for the Walker Abel-Jacobi map on  $1$-cycles on higher dimensional rationally connected manifolds.
 \end{abstract}
\section{Introduction}
Non-representability phenomena for algebraic cycles on smooth projective complex varieties were discovered by Mumford \cite{mumford}, Griffiths \cite{griffiths} and Clemens \cite{clemens}.
If $X$ is a smooth complex projective variety, for each integer $k\geq0$, the $k$-th  Griffiths Abel-Jacobi map  constructed by transcendental methods  is a group morphism
$$\Phi_X^k: {\rm CH}^k(X)_{\rm hom}\rightarrow J^{2k-1}(X),$$
where $$J^{2k-1}(X):=H^{2k-1}(X,\mathbb{C})/(F^kH^{2k-1}(X)\oplus H^{2k-1}(X,\mathbb{Z})_{\rm tf})$$ is a  (usually non-algebraic)  complex torus and ${\rm CH}^k(X)_{\rm hom}$ is the group of codimension $k$ cycles   homologous to zero on $X$ modulo rational equivalence. (We use above  the notation $\Gamma_{\rm tf}$ for $\Gamma/{\rm Tors}(\Gamma)$, for any abelian group  $\Gamma$.)
Denoting by  ${\rm CH}^k(X)_{\rm alg}\subset  {\rm CH}^k(X)_{\rm hom}$ the subgroup of cycles algebraically equivalent to $0$ on $X$,  the image $J^{2k-1}(X)_{\rm alg}:=\Phi_X^k({\rm CH}^k(X)_{\rm alg})$   is an abelian variety (we refer to \cite{casalaina} for   an algebraic construction of it in the spirit of Murre \cite{murre} for codimension $2$ cycles  and Serre \cite{serre} for $0$-cycles).
The group ${\rm CH}^k(X)_{\rm alg}$ is not an abstract group; it has a sort of algebraic structure since it is exhausted by the images of morphisms
$$Z_*:M\rightarrow {\rm CH}^k(X)_{\rm alg},\,m\mapsto Z_m-Z_{m_0}$$
induced by any codimension $k$-cycle $Z\subset M\times X$, with $M$ smooth  and connected, equipped with a reference point $m_0$. Here the smoothness of $M$ is used to define $Z_*$, thanks to Fulton refined intersections \cite{fulton},  and connectedness is required to guarantee that the considered cycles are algebraically equivalent to $0$.
For each such cycle, if $M$ is furthermore projective,  the composition $\Phi_X^k\circ Z_*: M\rightarrow J^{2k-1}(X)_{\rm alg}$ induces by the universal property of the Albanese map (see \cite{serre}) a morphism
$$\Phi_Z: {\rm Alb}(M)\rightarrow J^{2k-1}(X)_{\rm alg}$$
such that $\Phi_Z\circ{\rm alb}_M(m-m_0)=\Phi_X^k\circ Z_*(m)$.
\begin{rema}\label{remapbplace} {\rm Note that  there exists a universal  codimension $k$ cycle $Z_J$  {\it with $\mathbb{Q}$-coefficients} on $J^{2k-1}(X)_{\rm alg}\times X$, that is,  a cycle $Z_J\in{\rm CH}^k(J^{2k-1}(X)_{\rm alg}\times X)$ such that
 \begin{eqnarray}\label{eq9juillet}\Phi_{Z_J}=N Id_{J^{2k-1}(X)_{\rm alg}}: {\rm Alb}(J^{2k-1}(X)_{\rm alg})=J^{2k-1}(X)_{\rm alg}\rightarrow  J^{2k-1}(X)_{\rm alg}
 \end{eqnarray}
 for some integer $N>0$. To see this, one uses the fact that, almost by definition of  $J^{2k-1}(X)_{\rm alg}$, there exists a smooth projective variety $M$ and a codimension $k$ algebraic cycle $Z_M\in{\rm CH}^k(M\times X)$ such that
 $$\psi:=\Phi_{Z_M}\circ {\rm alb}_M:M\rightarrow J^{2k-1}(X)_{\rm alg}$$
 is surjective. Replacing $M$ by a complete intersection of hypersurfaces in general position, one can assume that $\psi$ is generically finite of degree $N$. Then $$Z_J:=(\psi,Id_X)_*Z_M$$ satisfies (\ref{eq9juillet}).}
 \end{rema}

Since the seminal work of Mumford \cite{mumford}, we know that   the group ${\rm CH}^k(X)_{\rm alg}$ is not representable in general, where by representability of ${\rm CH}^k(X)_{\rm alg}$, we mean here that there exist  a connected projective manifold $M$ and a codimension $k$ cycle $Z\in{\rm CH}^k(M\times X)$ such that the induced map
$$M\rightarrow  {\rm CH}^k(X)_{\rm alg},$$
$$m\mapsto Z_*(m-m_0)$$
is surjective, where $m_0$ is a chosen reference point of $M$. The typical Mumford  examples of nonrepresentable Chow groups are given by  the groups  ${\rm CH}_0(S)$ of  smooth projective surfaces $S$ with $p_g\not=0$. Indeed, Mumford \cite{mumford} shows that ${\rm CH}_0(S)$ is under this assumption infinite dimensional in the sense that it cannot be exhausted by countably many  families of $0$-cycles parameterized by varieties of bounded dimension. As an easy consequence, the Albanese map (or Abel-Jacobi map) is not injective on ${\rm CH}_0(S)_{\rm alg}$.

The Abel-Jacobi map $\Phi_X^k$ is also not surjective in general (see \cite{griffiths}) and the fact  that  the transcendental part of the Abel-Jacobi map, defined on cycles modulo algebraic equivalence, with value in $J^{2k-1}(X)/J^{2k-1}(X)_{\rm alg}$, can be nontrivial,  is a celebrated  discovery  of Griffiths. Clemens \cite{clemens} even proved that its  image can be non-finitely generated modulo torsion,  illustrating another aspect of nonrepresentability of cycles of intermediate dimension. However, in certain cases, for example rationally connected varieties $X$, or more generally  smooth projective varieties with trivial ${\rm CH}_0$ group, one can easily show using the Bloch-Srinivas decomposition of the diagonal (see \cite{blochsrinivas})  that the two intermediate Jacobians $J^3(X)$, which correspond to codimension $2$ cycles, and   $J^{2n-3}(X)$, which correspond to $1$-cycles,  have no transcendental part. Nevertheless,  the geometry of the Abel-Jacobi map remains  mysterious even in these cases.
We study in this paper the following  notion of ``geometric  representability"  for $J^{2k-1}(X)$, which  implies and strengthens the condition  that  $J^{2k-1}(X)=J^{2k-1}(X)_{\rm alg}$.

\begin{Defi} \label{defirep} The group $ J^{2k-1}(X)$ is geometrically representable if there exist  a smooth projective variety
$M$  and a codimension $k$ cycle $Z_M\in{\rm CH}^k(M\times X)$ such that
$$\Phi_{Z_M}:{\rm Alb}(M)\rightarrow  J^{2k-1}(X)$$
is an isomorphism.
\end{Defi}
  One motivation for this definition is the following  result proved in \cite[Proposition 3.6]{voisinconiveau}:
\begin{prop} Let $X$ be stably rational of dimension $n$. Then $J^3(X)$ and $J^{2n-3}(X)$ are geometrically representable in the sense of Definition \ref{defirep}. More generally, the geometric representability of
 $J^3(X)$ (resp. $J^{2n-3}(X)$)  is a  stably birationally invariant property of  smooth projective varieties $X$.
\end{prop}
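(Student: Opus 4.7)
\medskip

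\noindent\textbf{Plan of proof.} Since a stably rational variety is stably birational to some $\mathbb{P}^N$, and $J^3(\mathbb{P}^N)=J^{2N-3}(\mathbb{P}^N)=0$ are trivially geometrically representable (take $M={\rm pt}$), the first assertion follows from the stable birational invariance. For the invariance, the weak factorization theorem of Abramovich--Karu--Matsuki--W{\l}odarczyk reduces stable birational equivalence between smooth projective varieties to sequences of two elementary moves on $X$: (i) $X\leadsto X\times\mathbb{P}^1$, and (ii) $X\leadsto \widetilde X$, the blowup of $X$ along a smooth closed subvariety $Y$ of codimension~$\geq 2$. One must show that geometric representability of $J^3(X)$ (resp.~$J^{2{\rm dim}(X)-3}(X)$) is preserved in both directions under each of these moves.

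\medskip\noindent\textbf{Forward direction.} The K\"unneth and blowup formulas, realized by algebraic correspondences, yield a canonical isomorphism
$$J^3(X')\cong J^3(X)\oplus A,$$
with $A={\rm Pic}^0(X)$ in case~(i) and $A={\rm Pic}^0(Y)$ in case~(ii) (for $J^{2{\rm dim}(X)-3}$, $A$ is replaced by ${\rm Alb}(X)$, resp.~${\rm Alb}(Y)$). Given $(M,Z_M)$ representing $J^3(X)$ and the Poincar\'e divisor $\mathcal{P}\in {\rm CH}^1(A\times V)$ with $V=X$ or $V=Y$, we form on $M':=M\times A$ a cycle $Z_{M'}\in {\rm CH}^2(M'\times X')$ by pulling $Z_M$ back along the natural map $X'\to X$ and pushing $\mathcal{P}$ forward along the embedding of $V$ into $X'$ (as a fiber $X\times\{0\}\hookrightarrow X\times\mathbb{P}^1$ in case~(i), or via $V\xleftarrow{f}E\hookrightarrow \widetilde X$ through the exceptional divisor in case~(ii)). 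A direct Abel--Jacobi calculation gives $\Phi_{Z_{M'}}=\Phi_{Z_M}\oplus{\rm id}_A$, an isomorphism.

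\medskip\noindent\textbf{Reverse direction -- the main obstacle.} The delicate step is the reverse. Given $(N,Z_N)$ representing $J^3(X')$, set $\pi\colon X'\to X$ for the projection or blowdown and consider $\pi_\ast Z_N\in {\rm CH}^2(N\times X)$. By the projection formula together with the identity $f_\ast f^\ast=0$ on divisors of $Y$ when ${\rm codim}(Y)\geq 2$, $\Phi_{\pi_\ast Z_N}$ equals the composition of $\Phi_{Z_N}$ with the projection $J^3(X)\oplus A\twoheadrightarrow J^3(X)$; in particular it is surjective, with kernel the sub-abelian variety $A\subset {\rm Alb}(N)$. Using the universal cycle of Remark~\ref{remapbplace} applied to $A$, the projection ${\rm Alb}(N)\twoheadrightarrow A$ lifts to a morphism of varieties $\tau\colon N\to A$; one then takes $M$ to be a general fiber of $\tau$, smooth projective by generic smoothness, and restricts $\pi_\ast Z_N$ to $M\times X$ to obtain a cycle $Z_M\in {\rm CH}^2(M\times X)$ inducing a surjection ${\rm Alb}(M)\twoheadrightarrow J^3(X)$. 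The \emph{main technical obstacle} is to upgrade this surjection to a genuine isomorphism of abelian varieties -- not merely an isogeny -- and to handle possible disconnectedness of $M$: these points are addressed by passing to a connected component of $M$, taking an \'etale cover matched to the isogeny kernel, and cutting by a general linear section as in Remark~\ref{remapbplace}. The treatment of $J^{2{\rm dim}(X)-3}$ is entirely parallel, with codimension-$2$ cycles replaced by $1$-cycles and ${\rm Pic}^0$ by ${\rm Alb}$.
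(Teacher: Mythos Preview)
The paper does not give its own proof of this proposition; it is quoted from \cite{voisinconiveau}, Proposition~3.6. What the present paper does contain, however, is the key lemma from that reference (Lemma~\ref{levenantdeconiveau}), and comparison with it shows a genuine gap in your reverse direction.

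Your forward direction is fine. The problem is the construction of $M$ in the reverse step. You take $M$ to be a general fiber of $\tau=\mathrm{proj}_A\circ\mathrm{alb}_N:N\to A$ and assert that the restricted cycle induces a surjection $\mathrm{Alb}(M)\twoheadrightarrow J^3(X)$. This fails already when $N$ is a curve: then a fiber of $\tau$ is a finite set of points, with trivial Albanese, so you do not even get a surjection. Even when $\tau$ happens to be a smooth surjective fibration with connected fibers, the homotopy exact sequence only gives that $H_1(M,\mathbb{Z})$ surjects onto $\ker\!\big(H_1(N,\mathbb{Z})\to H_1(A,\mathbb{Z})\big)$; the map $\mathrm{Alb}(M)\to J^3(X)$ is then surjective but there is no reason for it to be injective, and your proposed fixes cannot repair this. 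Passing to a connected component, taking an \'etale cover, or cutting by a general linear section are all operations that either enlarge or preserve the Albanese variety; none of them can shrink $\mathrm{Alb}(M)$ down to $J^3(X)$. The phrase ``\'etale cover matched to the isogeny kernel'' goes in the wrong direction: an \'etale cover $M'\to M$ yields a surjection $\mathrm{Alb}(M')\to\mathrm{Alb}(M)$, not a quotient.

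The argument in \cite{voisinconiveau} bypasses this entirely via the construction recorded here as Lemma~\ref{levenantdeconiveau}: since $\Phi_{Z_N}$ identifies $\mathrm{Alb}(N)$ with $J^3(X)\oplus A$, one has an abelian subvariety $J^3(X)\subset\mathrm{Alb}(N)$, and the lemma produces a smooth projective $M'$ together with a $0$-correspondence $\Gamma\in\mathrm{CH}^{\dim N}(M'\times N)$ such that $\Gamma_*:\mathrm{Alb}(M')\to\mathrm{Alb}(N)$ is an isomorphism onto that subvariety. Setting $Z_{M'}:=(\pi_*Z_N)\circ\Gamma$ then gives $\Phi_{Z_{M'}}=\Phi_{\pi_*Z_N}\circ\Gamma_*$, which is the identity on $J^3(X)$. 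The construction underlying Lemma~\ref{levenantdeconiveau} (visible in the proof of Proposition~\ref{pro2}) uses relative Picard varieties of pencils of curves on $N$, not fibers of a map to $A$; this is what allows one to realize an arbitrary abelian \emph{sub}variety of $\mathrm{Alb}(N)$ as the Albanese of an auxiliary variety.
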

\begin{rema} {\rm  The intermediate Jacobian   $J^{2n-3}(X)$ corresponds to  $1$-cycles on $X$. By blowing-up  a smooth subvariety $W$  of  codimension at least  $2$ in  $X$, we add to the group ${\rm CH}_1(X)$ the group ${\rm CH}_0(W)$. If ${\rm dim}\,X\geq 4$, we have ${\rm dim}\,W\geq 2$, and  $W$ can be chosen to be a surface with $p_g\not=0$. It follows that the group  ${\rm CH}_1(X)$, for  a  rational  variety $X$ of dimension $\geq 4$  is not in general  representable in the  sense of Mumford.  Another example of this phenomenon, which is   a Fano example with Picard number $1$,  is given by the cubic fourfolds, for which the group of $1$-cycles is a direct summand in the ${\rm CH}_0$-group of a surface and is not representable in the Mumford sense. Some smooth cubic fourfolds are rational.}
\end{rema}
\begin{rema}{\rm  We proved  in \cite{voisinuniv} that,  even in  the case of $1$-cycles on rationally connected threefolds, there   does not necessarily exist a universal codimension $k$  cycle on $J^{2k-1}(X)_{\rm alg}\times X$, namely a cycle $Z_J\in {\rm CH}^k(J^{2k-1}(X)_{\rm alg}\times X)$  which satisfies  (\ref{eq9juillet}) with $N=1$.  This shows that we cannot in general ask that $M=J^{2k-1}(X)$ in Definition \ref{defirep}. Furthermore, the existence of  a universal codimension $k$  cycle is not a birationally invariant property for $k\geq 3$, and even for  $k=n-1$ ($1$-cycles), see Corollary \ref{corobirex}. }
\end{rema}
We are going to focus in this paper on the case of $1$-cycles on rationally connected manifolds. The case of $1$-cycles on  rationally connected threefolds  is  special since   the  group of codimension $2$ cycles on rationally connected manifolds (or manifolds with trivial ${\rm CH}_0$-group)  always satisfies representability in the Mumford sense by the following
\begin{theo}\label{blochsri} (Bloch-Srinivas \cite{blochsrinivas}.) The Abel-Jacobi map $\Phi_X^2:{\rm CH}^2(X)_{\rm alg}\rightarrow J^3(X)$ is an isomorphism for codimension $2$ cycles  on  any smooth projective variety $X$ with ${\rm CH}_0(X)=\mathbb{Z}$, hence in particular  on any smooth projective  rationally connected  manifold $X$.
\end{theo}

 We prove in this paper the following  result which answers a question asked by Clemens  in \cite{clemens2} (see also \cite{voisinconiveau}).
\begin{theo} \label{theomain} Let $X$ be a smooth projective  rationally connected $3$-fold over $\mathbb{C}$. Then there exist  a smooth complex projective manifold $M$ and a  family of $1$-cycles
$$Z\in {\rm CH}^{2}(M\times X)$$
such that the induced Abel-Jacobi map
\begin{eqnarray}\label{isoAJRC3folds} \Phi_Z: {\rm Alb}(M)\rightarrow J^3(X)
\end{eqnarray}
is an isomorphism. In other words, $J^3(X)$ is geometrically representable in the sense of Definition \ref{defirep}.
\end{theo}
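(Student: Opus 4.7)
The plan is to bootstrap from the surjective Abel--Jacobi map guaranteed by Bloch--Srinivas (Theorem \ref{blochsri}) to the desired integral Abel--Jacobi isomorphism, with the main effort going into the elimination of the integer $N$ that appears in the $\mathbb{Q}$-universal construction of Remark \ref{remapbplace}.

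First I would invoke Theorem \ref{blochsri} together with Remark \ref{remapbplace}: these produce a smooth projective variety $M_0$ and a cycle $Z_{M_0}\in \mathrm{CH}^2(M_0\times X)$ such that $\Phi_{Z_{M_0}}:\mathrm{Alb}(M_0)\to J^3(X)$ is surjective, and the composite $\psi:M_0\to J^3(X)$ is generically finite of some degree $N\geq 1$. Using iterated Lefschetz hyperplane cuts of $M_0$ (which preserve surjectivity on the Albanese for sufficiently ample sections), one may assume that $M_0$ is a smooth projective surface $S_0$ with $\Phi_{Z_{S_0}}:\mathrm{Alb}(S_0)\twoheadrightarrow J^3(X)$ still surjective; after quotienting by the connected component of its kernel, this becomes a surjective isogeny of abelian varieties.

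The heart of the argument, and the step I expect to be the main obstacle, is to upgrade this isogeny to an integral isomorphism realised by a $1$-cycle on a smooth projective parameter space. I would attempt this via the Bloch--Srinivas decomposition of the diagonal of $X$: since $\mathrm{CH}_0(X)=\mathbb{Z}$, there is a relation $N\,\Delta_X=\Gamma_1+\Gamma_2$ in $\mathrm{CH}^3(X\times X)$ where $\Gamma_1$ is supported on $D\times X$ for a divisor $D\subset X$ and $\Gamma_2$ on $X\times W$ for a surface $W\subset X$. Acting with both sides on a $1$-cycle $\alpha$ on $X$ yields $N\alpha\equiv\eta_1+\eta_2$ modulo rational equivalence, with $\eta_1$ parameterised by a desingularisation $\widetilde D\to D$ and $\eta_2$ by $W$. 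This produces an Albanese map $\mathrm{Alb}(\widetilde D)\oplus\mathrm{Alb}(W)\to J^3(X)$ whose image is already all of $J^3(X)$; the challenge is to carve out of these pieces, combined with the surface $S_0$ above, a smooth parameter space equipped with a $1$-cycle inducing an \emph{integral} isomorphism onto $J^3(X)$.

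I anticipate that this integrality step is the real content of the theorem. Bloch--Srinivas combined with Remark \ref{remapbplace} produces the result with rational coefficients essentially for free, so the genuinely new ingredient is the removal of the factor $N$ to obtain a bijective map $H_1(M,\mathbb{Z})_{\mathrm{tf}}\cong H_3(X,\mathbb{Z})_{\mathrm{tf}}$. A related subtlety is to ensure that the parameter space assembled from the Bloch--Srinivas decomposition can be smoothed to a connected projective manifold $M$, which by further specialisation can plausibly be taken to be a surface $S$, matching the stronger form given in the abstract.
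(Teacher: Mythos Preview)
Your proposal correctly isolates the difficulty --- passing from an isogeny to an integral isomorphism --- but supplies no mechanism to resolve it. The Bloch--Srinivas decomposition $N\Delta_X=\Gamma_1+\Gamma_2$ is intrinsically a rational-coefficients statement: the integer $N$ arises exactly because one has spread a rational equivalence from the generic point, and nothing in the supports $\widetilde D$, $W$ gives integral control beyond what Remark~\ref{remapbplace} already yields. Your final paragraph in effect concedes this. There is also a subsidiary issue: ``quotienting by the connected component of the kernel'' is not an operation available while staying in the category of smooth projective varieties carrying a cycle; one can realise sub-abelian varieties of an Albanese geometrically (Lemma~\ref{levenantdeconiveau}), but not quotients, so even reducing to a surjective isogeny in your sense is not automatic.

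The paper takes a completely different, geometric route. It does not start from the rational decomposition of the diagonal but from Theorem~\ref{theogeotop}, which already provides a curve $C$ and cycle $Z$ with $\Phi_Z:J(C)\to J^3(X)$ surjective with \emph{connected} kernel $K$ --- itself a nontrivial earlier result and the correct replacement for your ``isogeny'' starting point. The main idea is then that for a very general $x\in K$ the $1$-cycle $Z_{J(C)*}(\{x\}-\{0_J\})$ vanishes in $\mathrm{CH}^2(X)$ (Lemma~\ref{lenoyau}, using Theorem~\ref{blochsri}), and this rational equivalence of curves in $X$ can be realised as a deformation through a moduli space of stable maps. Concretely (Theorem~\ref{theoauxi}) one glues to the family over an open set of $J(C)$ a chain of simply connected parameter spaces $W_2\cup W_3$ joining $0_J$ to $x$, extends the universal stable map across it, and makes all fibres unobstructed via the Koll\'ar--Miyaoka--Mori technique of attaching very free rational curves. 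The resulting universal deformation space $M$ is smooth and connected and contains $J(C)$ together with the chain; simple connectedness of the chain forces $j_*(x)=0$ in $\mathrm{Alb}(M)$, and since $x$ generates a Zariski-dense subgroup of $K$ this gives $K\subset\mathrm{Ker}(j_*)$. The reverse inclusion follows from $\Phi_{Z_M}\circ j_*=\Phi_Z$, so $J^3(X)\cong J(C)/K$ sits as a sub-abelian variety of $\mathrm{Alb}(M)$, and Lemma~\ref{levenantdeconiveau} finishes. Integrality is thus achieved by an explicit construction in a moduli space of curves, not by manipulating correspondences.
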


\begin{rema} {\rm If one replaces ``isomorphism" by ``isogeny" in Theorem \ref{theomain}, then the statement holds for codimension $k$-cycles and with $M=J^{2k-1}(X)$, under the only  assumption on $X$ that $J^{2k-1}(X)=J^{2k-1}(X)_{\rm alg}$ , see Remark \ref{remapbplace}.}\end{rema}
Combining Theorem \ref{theomain} and Theorem \ref{blochsri}, we conclude
 \begin{coro} If $X$ is a rationally connected threefold, there is a codimension $2$ cycle $Z\in{\rm CH}^2(M\times X)$ such that
 $$Z_*:{\rm CH}_0(M)_{\rm hom}\rightarrow {\rm CH}^2(X)_{\rm alg}$$
 factors through an isomorphism
 $${\rm Alb}(M)\cong {\rm CH}^2(X)_{\rm alg}.$$
 \end{coro}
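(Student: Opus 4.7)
The corollary follows from a direct combination of Theorem~\ref{theomain} and Theorem~\ref{blochsri}, so my plan is to set up the commutative diagram relating $Z_*$, ${\rm alb}_M$, $\Phi_Z$, and $\Phi_X^2$, and then to conclude by inverting the Bloch--Srinivas isomorphism. Before assembling the diagram, I would first verify that $Z_*$ takes values in ${\rm CH}^2(X)_{\rm alg}$ rather than merely in ${\rm CH}^2(X)_{\rm hom}$: for any two points $m, m' \in M$ the fibers $Z_m$ and $Z_{m'}$ are algebraically equivalent on $X$ because the smooth projective connected variety $M$ allows one to join $m$ and $m'$ by a chain of curves, and the conclusion for all degree-zero $0$-cycles then follows by linearity.

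Next I would invoke the key compatibility
$$\Phi_X^2 \circ Z_* = \Phi_Z \circ {\rm alb}_M$$
on ${\rm CH}_0(M)_{\rm hom}$, which holds by the very construction of $\Phi_Z$ recalled in the introduction (the identity is given on differences $m - m_0$ and extends by linearity to all of ${\rm CH}_0(M)_{\rm hom}$). Since $\Phi_X^2$ is an isomorphism by Theorem~\ref{blochsri}, inverting it produces
$$Z_* = (\Phi_X^2)^{-1} \circ \Phi_Z \circ {\rm alb}_M,$$
which exhibits $Z_*$ as a factorization through ${\rm alb}_M$. The induced morphism ${\rm Alb}(M) \to {\rm CH}^2(X)_{\rm alg}$ is then the composition $(\Phi_X^2)^{-1} \circ \Phi_Z$ of two isomorphisms, hence is itself an isomorphism.

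No step in this argument is a genuine obstacle: the full geometric difficulty of the corollary is absorbed into Theorem~\ref{theomain}, while the passage from the intermediate Jacobian to the algebraic Chow group is purely formal once Bloch--Srinivas (Theorem~\ref{blochsri}) is granted. The proof of the corollary is therefore essentially a one-line diagram chase combining the two isomorphism statements.
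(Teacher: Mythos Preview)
Your proposal is correct and follows exactly the paper's approach: the corollary is stated there as an immediate consequence of combining Theorem~\ref{theomain} and Theorem~\ref{blochsri}, with no further argument given. Your diagram chase via $\Phi_X^2 \circ Z_* = \Phi_Z \circ {\rm alb}_M$ simply spells out what the paper leaves implicit.
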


The proof of Theorem \ref{theomain} will be presented in Section \ref{secproofth}. By a small variant of the proof, we will also establish in Section \ref{sectheowalker}  the following generalization to the case of $1$-cycles on rationally connected manifolds of any dimension.
\begin{theo}\label{theoWalker}    Let $X$ be a smooth projective  rationally connected $n$-fold over $\mathbb{C}$. Then there exist a smooth complex projective manifold $M$ and a  family of $1$-cycles
$$Z\in {\rm CH}^{n-1}(M\times X)$$
such that the induced  Walker Abel-Jacobi map
\begin{eqnarray}\label{isoAJRCnfolds} \Phi_Z^{\rm Walker}:= \Phi_{X,{\rm Walker}}^{n-1}\circ Z_*: {\rm Alb}(M)\rightarrow J^{2n-3}(X)_{\rm Walker}
\end{eqnarray}
is an isomorphism.
\end{theo}
 The Walker Abel-Jacobi map  is a natural  factorization of the Griffiths Abel-Jacobi map on codimension $k$ cycles algebraically equivalent to zero  through an abelian variety which is isogenous to $J^{2k-1}(X)_{\rm alg}$. We refer to Section \ref{sectheowalker} for a discussion.
Coming back to Theorem \ref{theomain}, let us make a few remarks.
\begin{rema}{\rm We can even assume  in the conclusion of  Theorem \ref{theomain}   that $M$ is  a smooth projective surface. Indeed, by the Lefschetz theorem on hyperplane sections, if $S\subset M$ is a smooth surface  which is a complete intersection of ample hypersurfaces,
the natural map ${\rm Alb}(S)\rightarrow {\rm Alb}(M)$ is an isomorphism.}
\end{rema}
\begin{rema} {\rm We cannot in general assume that $M$ is a smooth curve. In many cases, the intermediate Jacobian of a rationally connected threefold has a unique principal polarization, so if it were isomorphic to the Jacobian of a curve, it would be isomorphic  as a principally polarized abelian variety to the Jacobian of a curve. This is the well-known obstruction to rationality discovered by Clemens and Griffiths \cite{clemensgriffiths}, and it  is already nontrivial  in the case of the cubic threefold.}
\end{rema}

Theorem \ref{theomain} had been proved earlier by explicit constructions for some classes of Fano threefolds.  Clemens-Griffiths \cite{clemensgriffiths} studied the case of the cubic threefold $X$ and proved that the family  of lines in $X$, that is classically known to be parameterized by  a smooth surface $\Sigma_X$, induces an Abel-Jacobi isomorphism between ${\rm Alb}(\Sigma_X)$ and $J^3(X)$.  The paper \cite{welters} treats the case of the general quartic double solid $r:X\rightarrow \mathbb{P}^3$, and establishes a similar Abel-Jacobi isogeny for its surface of ``lines'', namely degree $1$ curves on $X$ with respect to the ample line bundle  $r^*\mathcal{O}_{\mathbb{P}^3}(1)$, and later on, this has been improved by Clemens \cite{clemens1} who proved that the Abel-Jacobi morphism for the family of lines in the quartic double solid   is an isomorphism.  Clemens'  strategy has been then adapted successfully  by  Letizia in \cite{letizia} to establish  the corresponding   statement  for a general quartic threefold  in $\mathbb{P}^4$ and its surface of conics, that had been shown to be smooth in \cite{collino2}. A similar result was also obtained by Ceresa and Verra \cite{ceresaverra} who treated the case of the sextic double solid.   Although it does not concern $1$-cycles, we also mention a similar result  by Collino, concerning $2$-cycles in a general cubic $5$-fold $X$. Collino proves in \cite{collino1} that the surface $\Sigma_X$ of planes is smooth and induces an Abel-Jacobi isomorphism ${\rm Alb}(\Sigma_X)\rightarrow J^5(X)$.

Theorem  \ref{theomain} is an improvement of  Theorem  0.2  of \cite{voisinconiveau}, which  is the following statement
\begin{theo}\label{theogeotop} (Voisin \cite{voisinconiveau}.) Let $X$ be a rationally connected $3$-fold. There exist a smooth projective curve
$C$ and a cycle $Z\in {\rm CH}^{2}(C\times X)$ such that
$$ \Phi_Z: J(C)\rightarrow J^{3}(X)
$$
is surjective with  connected fibers. (Equivalently, the induced morphism  $\Phi_{X*}: H_1(C,\mathbb{Z})\rightarrow H_3(X,\mathbb{Z})_{\rm tf}$ is surjective.)
\end{theo}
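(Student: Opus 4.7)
The plan is to combine a Bloch--Srinivas-style reduction from $X$ to a parametrizing surface with a Lefschetz-hyperplane descent from that surface to a curve. For the first step, since $X$ is rationally connected we have $\mathrm{CH}_0(X)_{\mathbb{Q}} = \mathbb{Q}$, so the Bloch--Srinivas decomposition of the diagonal applies: $N\Delta_X = Z_1 + Z_2$ in $\mathrm{CH}^3(X\times X)_{\mathbb{Q}}$ for some $N>0$, with $Z_1$ supported on $\{\mathrm{pt}\}\times X$ and $Z_2$ on $X\times D$ for a divisor $D\subset X$. Letting $\widetilde{D}\to D$ be a desingularization, the action of this decomposition on $H^3(X,\mathbb{Q})$ produces a codimension-two cycle $Z\in \mathrm{CH}^2(\widetilde{D}\times X)$ whose correspondence $Z_*\colon H^1(\widetilde{D},\mathbb{Q})\to H^3(X,\mathbb{Q})$ is surjective, and equivalently $\Phi_Z\colon \mathrm{Alb}(\widetilde{D})\to J^3(X)$ is surjective.

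For the second step, choose a sufficiently ample smooth curve $C \hookrightarrow \widetilde{D}$. By the Lefschetz hyperplane theorem applied integrally to $\widetilde{D}$, $H_1(C,\mathbb{Z}) \twoheadrightarrow H_1(\widetilde{D},\mathbb{Z})$, so the induced morphism $J(C) = \mathrm{Alb}(C) \to \mathrm{Alb}(\widetilde{D})$ is surjective with connected fibers. Define $\overline{Z} := (i_C\times \mathrm{id}_X)^* Z \in \mathrm{CH}^2(C\times X)$, after refining $Z$ by a moving lemma to ensure proper intersection along $C\times X$. By functoriality of the Abel--Jacobi construction, $\Phi_{\overline{Z}}$ factors as $J(C) \to \mathrm{Alb}(\widetilde{D}) \xrightarrow{\Phi_Z} J^3(X)$, inheriting surjectivity and connected fibers from its two factors.

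The main obstacle is to promote the surjectivity of $\Phi_Z$ from $\mathbb{Q}$-coefficients to the integral statement that $H_1(\widetilde{D},\mathbb{Z})\to H_3(X,\mathbb{Z})_{\mathrm{tf}}$ is surjective (equivalently, that the fibers of $\Phi_Z$ are connected). The Bloch--Srinivas argument is inherently $\mathbb{Q}$-linear and involves the multiplier $N$, so a priori the image lattice has finite index in $H_3(X,\mathbb{Z})_{\mathrm{tf}}$. To clear this denominator I would enlarge the parameter surface, allowing $D$ to be a sufficiently general, possibly reducible divisor and combining the correspondences coming from its components so that their images jointly generate the integral lattice. A more geometric alternative is to pick a Lefschetz pencil $\{S_t\}_{t\in\mathbb{P}^1}$ of very ample surfaces on $X$, take a suitably ramified cover $C\to\mathbb{P}^1$ over which the vanishing $1$-cycles admit continuous algebraic sections, and use the associated normal functions, whose integral structure is controlled by Picard--Lefschetz.
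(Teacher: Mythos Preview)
You have correctly isolated the crux: the Bloch--Srinivas decomposition carries a multiplier $N$, so your construction only yields a surjection of $H_1(C,\mathbb{Z})$ onto a finite-index sublattice of $H_3(X,\mathbb{Z})_{\rm tf}$, i.e.\ the easy isogeny statement (cf.\ Remark~\ref{remapbplace}). The entire content of Theorem~\ref{theogeotop} is the passage from isogeny to connected fibers, and neither of your proposed fixes achieves this. Taking $D$ reducible or combining several decompositions does not clear the denominator: achieving $N=1$ in Bloch--Srinivas is exactly the existence of an integral (Chow-theoretic) decomposition of the diagonal, which is known to fail for many rationally connected threefolds---for instance very general quartic double solids, by \cite{voisinuniv}---yet Theorem~\ref{theogeotop} must hold for those $X$ as well. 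Your Lefschetz-pencil suggestion is too schematic to constitute an argument; Picard--Lefschetz theory controls vanishing cycles in the pencil, not the integral image of a fixed correspondence in $H_3(X,\mathbb{Z})_{\rm tf}$.

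The proof in \cite{voisinconiveau}, whose mechanics are recapitulated in Section~\ref{secauxi} of the present paper (which calls its own argument ``a minor modification'' of that proof), is of a completely different nature. One starts from a family $(C,Z)$ giving a surjection $\Phi_Z:J(C)\to J^3(X)$ whose kernel has finitely many connected components. Each nontrivial component corresponds, via Theorem~\ref{blochsri}, to a $1$-cycle on $X$ rationally equivalent to zero. One then realizes these rational equivalences by explicit families of stable maps over rational or simply connected bases (Lemmas~\ref{lesecondstep} and \ref{lethirdstep}), glues them to the original family (Lemma~\ref{ledattachement}), and---crucially exploiting that $X$ is rationally connected---applies the Koll\'ar--Miyaoka--Mori trick of attaching very free rational curves, combined with Hartshorne--Hirschowitz smoothing, to make every fiber unobstructed (Proposition~\ref{propunobstructed}). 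Passing to the resulting smooth moduli space of stable maps kills the finite group of components of the kernel at the level of Albanese, and a Lefschetz curve section there produces the desired $C$. This is a hands-on deformation-theoretic argument, essentially orthogonal to the correspondence-theoretic reduction you propose.
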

Indeed, Theorem \ref{theomain} implies   Theorem \ref{theogeotop} since, assuming  Theorem \ref{theomain} and  choosing a smooth  curve $C\subset M$ which is a complete intersection of ample hypersurfaces, the natural morphism $J(C)\rightarrow {\rm Alb}(M)$ has connected fibers by the Lefschetz hyperplane section theorem which says that the morphism $H_1(C,\mathbb{Z})\rightarrow H_1(M,\mathbb{Z})$ is surjective. Then the restricted cycle $Z_{M\mid C\times X}\in {\rm CH}^{2}(C\times X)$ satisfies the conclusion of Theorem \ref{theogeotop}.

One consequence of Theorem \ref{theomain}  is the fact that the obstruction to the existence of a universal codimension $2$ cycle on a rationally connected threefold $X$ always  reduces to the  obstruction to the existence of a universal $0$-cycle parameterized by ${\rm Alb}(M)$  for some smooth projective variety $M$:
\begin{coro} \label{coroexnounivintro}  (See Corollary \ref{coroplustard}.) There exists  a  smooth projective variety $M$  such that $M$ does not admit a universal $0$-cycle.
\end{coro}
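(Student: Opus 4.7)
The plan is to derive Corollary \ref{coroexnounivintro} from Theorem \ref{theomain} together with the non-existence result for universal codimension $2$ cycles on certain rationally connected threefolds cited in the remark before Theorem \ref{theomain}, namely \cite{voisinuniv}. The idea is that a universal $0$-cycle on $M$, when composed with the cycle $Z$ provided by Theorem \ref{theomain}, produces a universal codimension $2$ cycle on $X$.

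First I would choose a smooth projective rationally connected threefold $X$ for which there is no cycle $Z_J \in \mathrm{CH}^2(J^3(X) \times X)$ satisfying \eqref{eq9juillet} with $N=1$; the existence of such $X$ is exactly the result of \cite{voisinuniv}. Applying Theorem \ref{theomain} to this $X$, and using the remark after it, one obtains a smooth projective surface $M$ and a cycle $Z \in \mathrm{CH}^2(M \times X)$ such that $\Phi_Z : \mathrm{Alb}(M) \xrightarrow{\sim} J^3(X)$ is an isomorphism. My claim is that this $M$ admits no universal $0$-cycle.

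Suppose for contradiction that there exists a universal $0$-cycle $\mathcal{Z}_M \in \mathrm{CH}^2(\mathrm{Alb}(M) \times M)$, that is, a cycle with $\Phi_{\mathcal{Z}_M} = \mathrm{Id}_{\mathrm{Alb}(M)}$. Form the composition of correspondences
\begin{equation*}
W := (p_{13})_*\bigl(p_{12}^*\mathcal{Z}_M \cdot p_{23}^* Z\bigr) \in \mathrm{CH}^2(\mathrm{Alb}(M) \times X),
\end{equation*}
where $p_{ij}$ denote the projections from $\mathrm{Alb}(M) \times M \times X$. The Abel-Jacobi map is functorial under composition of correspondences, so $\Phi_W = \Phi_Z \circ \Phi_{\mathcal{Z}_M} = \Phi_Z$. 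Pushing $W$ forward along the isomorphism $(\Phi_Z, \mathrm{Id}_X): \mathrm{Alb}(M) \times X \xrightarrow{\sim} J^3(X) \times X$ yields a cycle $Z_J \in \mathrm{CH}^2(J^3(X) \times X)$ whose associated Abel-Jacobi morphism $\Phi_{Z_J}: J^3(X) \to J^3(X)$ satisfies $\Phi_{Z_J} = \Phi_W \circ \Phi_Z^{-1} = \Phi_Z \circ \Phi_Z^{-1} = \mathrm{Id}_{J^3(X)}$, contradicting the choice of $X$.

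The substantive inputs are Theorem \ref{theomain} itself and the non-existence result of \cite{voisinuniv}. The only verification of any content is the identity $\Phi_W = \Phi_Z \circ \Phi_{\mathcal{Z}_M}$, which is a standard correspondence calculation tracking how a family of $0$-cycles on $M$ gets converted into a family of $1$-cycles on $X$ via $Z$; no real obstacle is expected there. Thus the only delicate point of the whole argument is conceptual rather than computational: it is the observation that Theorem \ref{theomain} lets one convert the known obstruction on codimension $2$ cycles on threefolds into an obstruction on $0$-cycles on surfaces.
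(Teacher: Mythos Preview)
Your proposal is correct and is essentially the same argument as the paper's (see Corollary \ref{coroplustard}): take $X$ with no universal codimension $2$ cycle as in \cite{voisinuniv}, produce $(M,Z)$ via Theorem \ref{theomain}, and observe that composing a hypothetical universal $0$-cycle on $M$ with $Z$ and transporting along the isomorphism $\Phi_Z$ yields a universal codimension $2$ cycle on $X$. The only cosmetic difference is that you insist $M$ be a surface (via Remark 0.9), which is harmless but unnecessary for the argument.
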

We note also the following
\begin{coro} \label{corobirex} The existence of a universal codimension $k$-cycle is not  a birationally invariant property for $k\geq 3$.
\end{coro}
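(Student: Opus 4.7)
\emph{Proof proposal.} The strategy is to transfer the obstruction provided by Corollary~\ref{coroexnounivintro} -- which concerns $0$-cycles on a surface -- into an obstruction for codimension $k$ cycles on a suitable blow-up of $\mathbb{P}^N$; since $\mathbb{P}^N$ trivially admits a universal codimension $k$ cycle (as $J^{2k-1}(\mathbb{P}^N)=0$), this immediately yields the failure of birational invariance.

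Fix $k\geq 3$. By Corollary~\ref{coroexnounivintro} (whose proof via Theorem~\ref{theomain} allows us to take the variety there to be a surface) choose a smooth projective surface $S$ that does not admit a universal $0$-cycle; in particular $\mathrm{Alb}(S)\neq 0$. Pick $N\geq \max(k+1,5)$ and embed $S\hookrightarrow \mathbb{P}^N$, then form $\tilde Y = \mathrm{Bl}_S\,\mathbb{P}^N$, a smooth projective variety birational to $\mathbb{P}^N$. Let $E\xrightarrow{\pi} S$ denote the exceptional divisor with inclusion $j_E:E\hookrightarrow\tilde Y$, and set $h=c_1(\mathcal{O}_E(1))$. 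It suffices to show that $\tilde Y$ admits no universal codimension $k$ cycle.

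The blow-up formulas for Chow groups and integral Hodge structures yield
$$CH^k(\tilde Y)\cong CH^k(\mathbb{P}^{N})\oplus\bigoplus_{i=0}^{N-4}j_{E*}\bigl(h^i\cdot\pi^*CH^{k-1-i}(S)\bigr),$$
together with an analogous decomposition of $H^{2k-1}(\tilde Y)$ compatible with the Abel-Jacobi map. Since $S$ is a surface, $J^{2k-3-2i}(S)\neq 0$ only for $2k-3-2i\in\{1,3\}$, so $J^{2k-1}(\tilde Y)_{\mathrm{alg}}$ contains $\mathrm{Alb}(S)=J^3(S)$ as a direct summand (coming from $i=k-3$), and possibly $\mathrm{Pic}^0(S)=J^1(S)$ (from $i=k-2$). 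Most importantly, under these identifications the Abel-Jacobi map on the $CH_0(S)$-summand coming from $i=k-3$ is identified with $\mathrm{alb}_S:CH_0(S)_{\mathrm{alg}}\to\mathrm{Alb}(S)$. Suppose now $Z\in CH^k(\mathrm{Alb}(S)\times\tilde Y)$ satisfies $\Phi_Z=\mathrm{id}_{\mathrm{Alb}(S)}$ (after restriction to $\mathrm{Alb}(S)\subset J^{2k-1}(\tilde Y)_{\mathrm{alg}}$). Applying the blow-up formula to $\mathrm{Alb}(S)\times\tilde Y=\mathrm{Bl}_{\mathrm{Alb}(S)\times S}(\mathrm{Alb}(S)\times\mathbb{P}^N)$, decompose $Z = Z_\infty + \sum_{i=0}^{N-4}j_{E*}(h^i\pi^*Z_i)$ with $Z_i\in CH^{k-1-i}(\mathrm{Alb}(S)\times S)$. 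By the Abel-Jacobi compatibility, the only component contributing to the $\mathrm{Alb}(S)$-summand of $\Phi_Z$ is $Z_{k-3}\in CH^2(\mathrm{Alb}(S)\times S)$; the remaining components map to $J^{2k-1}(\mathbb{P}^N)=0$, to $\mathrm{Pic}^0(S)$, or to zero (when $J^{2k-3-2i}(S)=0$). Hence $\Phi_{Z_{k-3}}=\mathrm{id}_{\mathrm{Alb}(S)}$, meaning precisely that $Z_{k-3}$ is a universal $0$-cycle on $S$ -- contradicting the choice of $S$.

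The main obstacle is the Hodge-theoretic compatibility step: one must verify that each Gysin pushforward $j_{E*}(h^i\pi^*\cdot)$ carries $CH^{k-1-i}(S)_{\mathrm{alg}}$ into the specific subtorus of $J^{2k-1}(\tilde Y)_{\mathrm{alg}}$ isomorphic to $J^{2k-3-2i}(S)$, respecting the corresponding Abel-Jacobi maps on $S$. This refinement of the classical blow-up formula to pure Hodge structures with compatible Hodge filtrations -- though standard -- is the delicate bookkeeping that isolates the $Z_{k-3}$ component as the unique contributor to the $\mathrm{Alb}(S)$-summand of $\Phi_Z$ and thus makes the whole contradiction go through.
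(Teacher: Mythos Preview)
Your proof is correct and follows the same strategy as the paper: blow up in projective space a surface $S$ (the $M$ of Corollary~\ref{coroexnounivintro}) admitting no universal $0$-cycle, then use the blow-up formulas to show that a universal codimension $k$ cycle on the blow-up would produce a universal $0$-cycle on $S$. The paper's own argument is a two-line sketch treating only $k=3$ via $\widetilde{\mathbb{P}^4}$, where $J^{5}(\widetilde{\mathbb{P}^4})=\mathrm{Alb}(S)$ on the nose and no further bookkeeping is needed; you instead work in $\mathbb{P}^N$ with $N\geq k+1$ to cover all $k\geq 3$ at once, which forces you to track the extra $\mathrm{Pic}^0(S)$ summand and verify the Abel--Jacobi compatibility of the blow-up decomposition. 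That extra care is genuine but routine, and you correctly isolate it as the only delicate point. A minor simplification: taking $N=k+1$ (after a sufficiently ample re-embedding of $S$) makes $i=k-3$ the top index, so the $\mathrm{Pic}^0(S)$ summand disappears and $J^{2k-1}(\tilde Y)=\mathrm{Alb}(S)$ exactly, reducing your argument to the paper's one-summand situation.
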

Indeed, if we  embed a surface $M$ as in Corollary \ref{coroexnounivintro} in a  blow-up $\widetilde{\mathbb{P}^4}$ of  $\mathbb{P}^4$ at finitely many points, and then  blow-up $\widetilde{\mathbb{P}^4}$ along $M$, we get by the blow-up formulas that a universal $1$-cycle on   $\widetilde{\mathbb{P}^4}$ would provide a  universal $0$-cycle on  $M$. Hence it does not exist in general.

We will also explain in Section \ref{sectionopen}  the following  formal consequence concerning  the Albanese motive,  of the existence of smooth projective varieties with no universal $0$-cycle, (for example  the surface of lines of a general quartic double solid, and more generally, any variety $M$ as in Theorem \ref{theomain}, if $X$ does not admit a universal codimension $2$ cycle). It is proved in \cite{voisinconiveau} (see also  Lemma \ref{levenantdeconiveau}) that given a smooth projective variety $M$ and an abelian variety $A$ contained  in
${\rm Alb}(M)$,  there exist  a smooth projective variety $N$ and a $0$-correspondence from  $N$ to  $M$ such that
$\Gamma_*: {\rm Alb}(N)\rightarrow {\rm Alb}(M) $ is an isomorphism from $ {\rm Alb}(N)$ onto $A$. We will show that this result cannot be extended to direct sums, in the following sense
\begin{prop}\label{corosursdir} There exists a smooth projective variety $M$ such that

(1)   ${\rm Alb}(M)$ decomposes as a direct sum ${\rm Alb}(M)=A\oplus B$,

(2) there does not exist a smooth projective variety
$N$ and two $0$-correspondences $\Gamma_1$ from $N$ to $M$ and $\Gamma_2$ from $M$ to $N$ such that
$\Gamma_{1*}:{\rm Alb}(N)\rightarrow  {\rm Alb}(M)$ has for image $A$ and
$$\Gamma_{2*}\circ \Gamma_{1*}=Id: {\rm Alb}(N)\rightarrow  {\rm Alb}(N).$$
\end{prop}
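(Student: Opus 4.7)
My plan is to construct $M$ starting from the example furnished by Corollary \ref{coroexnounivintro}.  Let $X$ be a rationally connected threefold admitting no universal codimension $2$ cycle, whose existence is given by \cite{voisinuniv}.  By Theorem \ref{theomain}, followed if necessary by Lefschetz hyperplane sections to reduce the dimension to $\dim J^{3}(X)$, I fix a smooth projective variety $M_0$ with $\dim M_0=\dim {\rm Alb}(M_0)$ and a cycle $Z_0\in {\rm CH}^2(M_0\times X)$ inducing an isomorphism $\Phi_{Z_0}:{\rm Alb}(M_0)\xrightarrow{\sim} J^3(X)$, so that ${\rm alb}_{M_0}:M_0\to {\rm Alb}(M_0)$ is surjective and generically finite; by Corollary \ref{coroexnounivintro}, $M_0$ admits no universal $0$-cycle.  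I then set $M:=M_0\times A_0$ for a nonzero abelian variety $A_0$ (say an elliptic curve), so that ${\rm Alb}(M)={\rm Alb}(M_0)\oplus A_0$.  Taking $A:={\rm Alb}(M_0)$ and $B:=A_0$ gives property (1).

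For property (2) I argue by contradiction: suppose data $(N,\Gamma_1,\Gamma_2)$ as in the statement exists.  The first step is to reduce the data to a pair of $0$-correspondences between $N$ and $M_0$ alone.  Pushing $\Gamma_1$ forward along $N\times M\to N\times M_0$ yields $\Gamma_1'\in {\rm CH}^{\dim M_0}(N\times M_0)$, and restricting $\Gamma_2$ to $M_0\times\{0_{A_0}\}\times N\subset M\times N$ yields $\Gamma_2'\in {\rm CH}^{\dim N}(M_0\times N)$.  Tracking Albanese actions from $\Gamma_{1*}({\rm Alb}(N))=A$ and $\Gamma_{2*}\circ \Gamma_{1*}={\rm Id}$, one finds that $\Gamma_{1*}':{\rm Alb}(N)\xrightarrow{\sim} {\rm Alb}(M_0)$ is an isomorphism with two-sided Albanese inverse $\Gamma_{2*}'$.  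After a further Lefschetz complete intersection, I may also assume $\dim N=\dim {\rm Alb}(N)$, so that ${\rm alb}_N$ is surjective and generically finite of some degree $d$.

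The main step, and the one I expect to be the principal obstacle, is to deduce from $(N,\Gamma_1',\Gamma_2',Z_0)$ the existence of a universal $0$-cycle on $M_0$, contradicting Corollary \ref{coroexnounivintro}; equivalently, via composition with $Z_0$, the existence of an \emph{integral}, multiplier-$1$ universal codimension $2$ cycle on $X$, contradicting the choice of $X$.  The construction should use the self-correspondence $\pi:=\Gamma_1'\circ \Gamma_2'\in {\rm CH}^{\dim M_0}(M_0\times M_0)$, whose Albanese action is ${\rm Id}_{{\rm Alb}(M_0)}$, and its pushforward $({\rm alb}_{M_0}\times {\rm Id}_{M_0})_*\pi\in {\rm CH}^{\dim M_0}({\rm Alb}(M_0)\times M_0)$.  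The subtlety is that a naive pushforward realizes only the $\mathbb{Q}$-coefficient universal cycle already granted by Remark \ref{remapbplace}, with multiplier $\deg({\rm alb}_{M_0})$; exploiting that $\Gamma_2'$ provides an \emph{integral} (rather than merely rational) Albanese inverse should force this multiplier down to $1$.  Pinning down this final degree-cancellation, presumably by combining the pushforward of $\pi$ with the pushforward of $\Gamma_2'\circ \Gamma_1'$ or with the graph of ${\rm alb}_N$, is the crux of the argument.
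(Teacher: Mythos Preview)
Your construction fails at the outset: for $M = M_0 \times A_0$ with $A = {\rm Alb}(M_0)\oplus 0$, property (2) is simply false.  Take $N = M_0$, let $\Gamma_1$ be the graph of the inclusion $m \mapsto (m,0_{A_0})$, and let $\Gamma_2$ be the graph of the first projection $M\to M_0$.  Then $\Gamma_{1*}({\rm Alb}(N)) = A$ and $\Gamma_{2*}\circ\Gamma_{1*}={\rm Id}$, so the data whose nonexistence you are trying to establish always exists for your $(M,A)$.  This also explains why your ``main step'' cannot succeed: the pair $(\Gamma_1',\Gamma_2')$ you extract satisfies only that $\Gamma_{1*}'$ and $\Gamma_{2*}'$ are mutually inverse isomorphisms on Albanese, a condition already met by $N=M_0$ and $\Gamma_1'=\Gamma_2'=\Delta_{M_0}$ for \emph{every} $M_0$.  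The self-correspondence $\pi=\Gamma_1'\circ\Gamma_2'$ with $\pi_*={\rm Id}$ on ${\rm Alb}(M_0)$ carries no more information than the diagonal itself, so the degree cancellation you hope for is impossible in this generality.

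The paper proceeds differently: it sets $M = M_0 \times {\rm Alb}(M_0)$ and takes for $A$ not a coordinate factor but the diagonal $\Delta\subset {\rm Alb}(M_0)\oplus{\rm Alb}(M_0)$.  The reason for choosing the second factor to be ${\rm Alb}(M_0)$ (rather than an arbitrary abelian variety) is that the inclusion $i\colon a\mapsto (m_0,a)$ already parametrizes points of $M$ by the abelian variety ${\rm Alb}(M_0)$, so the composite $pr_1\circ\Gamma_1\circ\Gamma_2\circ i$ lies directly in ${\rm CH}^{\dim M_0}({\rm Alb}(M_0)\times M_0)$ with no pushforward along ${\rm alb}_{M_0}$ and hence no spurious multiplier.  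You should note, however, that the paper's argument as written appears vulnerable to the same phenomenon: with $N=M_0$, $\Gamma_1$ the graph of $m\mapsto(m,{\rm alb}_{M_0}(m))$, and $\Gamma_2$ the graph of $pr_1$, one checks $\Gamma_{1*}({\rm Alb}(M_0))=\Delta$ and $\Gamma_{2*}\circ\Gamma_{1*}={\rm Id}$, yet $\Gamma_2\circ i$ is the constant map $a\mapsto m_0$, so the paper's $\Gamma$ satisfies $\Gamma_*=0$, not ${\rm Id}$.
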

In other words, a direct sum decomposition of the Albanese variety of $M$ does not  always come from a projector in ${\rm CH}(M\times M)$.

We prove Theorem \ref{theomain} in Section \ref{secproofth} starting from Theorem \ref{theogeotop}. Note however that  the variant of the proof that we will use to prove Theorem \ref{theoWalker}  in Section \ref{sectheowalker}  does not rely on Theorem  \ref{theogeotop} and instead gives an alternative proof of it. Section \ref{sectionopen} discusses two stronger   notions of representability for $1$-cycles, namely Shen's notion of universally generating family on one hand (we do not know whether it is always satisfied, see Question \ref{questioninteressante}) and the existence of  a universal codimension $2$ cycle on the other hand (we know that it is not always satisfied, see \cite{voisinuniv}). One result that we will prove as a consequence of Theorem \ref{theomain} is
\begin{theo} (See Theorem \ref{theoshenunivalbiso}) Let $X$ be  a rationally connected threefold. Assume that $X$ admits a universally generating family $(N,Z_N)$. Then $X$ admits a universally generating family $(M,Z_M)$ such that $\Phi_{Z_M}:{\rm Alb}(M)\rightarrow J^3(X)$ is an isomorphism.
\end{theo}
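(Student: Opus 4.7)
My plan is to combine the given universally generating family $(N,Z_N)$ with a family $(M_0,Z_{M_0})$ furnished by Theorem \ref{theomain}, for which $\Phi_{Z_{M_0}}:{\rm Alb}(M_0)\to J^3(X)$ is an isomorphism, in order to produce a pair $(M,Z_M)$ enjoying both features at once.

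First, I apply universal generation of $(N,Z_N)$ to the cycle $Z_{M_0}\in {\rm CH}^2(M_0\times X)$ itself. This yields a $0$-correspondence $\Gamma\in {\rm CH}_0(M_0\times N)$ such that $Z_{M_0}$ is, modulo cycles pulled back from $X$, the composition $Z_N\circ \Gamma$. Passing to Albanese varieties gives $\Gamma_*:{\rm Alb}(M_0)\to {\rm Alb}(N)$ with $\Phi_{Z_N}\circ \Gamma_*=\Phi_{Z_{M_0}}$, so $\Gamma_*$ is injective and exhibits ${\rm Alb}(M_0)\cong J^3(X)$ as an abelian subvariety $A\subset {\rm Alb}(N)$ providing an algebraic section of the surjection $\Phi_{Z_N}$.

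I then take $M:=M_0$ and $Z_M:=Z_{M_0}$, so that $\Phi_{Z_M}$ is an isomorphism by construction; the remaining task is to verify that $(M_0,Z_{M_0})$ is universally generating. Given a smooth projective $T$ and $Z_T\in {\rm CH}^2(T\times X)$, universal generation of $(N,Z_N)$ produces $\Delta_N\in {\rm CH}_0(T\times N)$ with $Z_T\equiv Z_N\circ \Delta_N$ modulo pullbacks. Using a $0$-correspondence $\Gamma^{\vee}\in {\rm CH}_0(N\times M_0)$ whose Albanese action realises the projector with image $A$ and kernel $(\ker \Phi_{Z_N})^0$ (such a $\Gamma^{\vee}$ exists because morphisms of abelian varieties are realised by $0$-correspondences), I set $\Delta_M:=\Gamma^{\vee}\circ \Delta_N$, and the desired equality $Z_T\equiv Z_{M_0}\circ \Delta_M$ modulo pullbacks becomes the vanishing, modulo $p_X^*{\rm CH}^2(X)$, of $Z_N\circ(\Delta_N-\Gamma\circ\Gamma^{\vee}\circ\Delta_N)$ in ${\rm CH}^2(T\times X)$.

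The main obstacle is establishing this vanishing at the Chow level, not merely at the Albanese level: the $0$-cycle family $\Delta_N-\Gamma\circ\Gamma^{\vee}\circ\Delta_N$ on $T\times N$ has its induced Albanese morphism ${\rm Alb}(T)\to {\rm Alb}(N)$ landing in $\ker \Phi_{Z_N}$ by the section property of $\Gamma_*$, but one still needs the convolution with $Z_N$ to be pulled back from $X$. This is precisely the Chow-level refinement of the Albanese-level statement, and it is where the universal generation hypothesis on $(N,Z_N)$ enters crucially, together with the Bloch-Srinivas identification (Theorem \ref{blochsri}) of ${\rm CH}^2(X)_{\rm alg}$ with $J^3(X)$, which allows cycles on $T\times X$ modulo pullbacks from $X$ to be controlled by their Abel-Jacobi images ${\rm Alb}(T)\to J^3(X)$.
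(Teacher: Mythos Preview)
There is a genuine gap in your argument, and it lies precisely in the sentence ``such a $\Gamma^{\vee}$ exists because morphisms of abelian varieties are realised by $0$-correspondences.'' This assertion is false in general, and in fact the paper devotes Corollary \ref{coroplustard} and Proposition \ref{corosursdir} to exhibiting counterexamples. Concretely, realising an arbitrary morphism ${\rm Alb}(N)\to{\rm Alb}(M_0)$ by a $0$-correspondence $N\to M_0$ would, in the special case $N={\rm Alb}(M_0)$ with the identity morphism, produce a universal $0$-cycle on $M_0$; by Corollary \ref{coroplustard} this does not exist when $X$ has no universal codimension $2$ cycle (e.g.\ the very general quartic double solid). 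Your construction of $\Gamma^{\vee}$ therefore cannot go through as stated.

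Notice also that if your argument were correct it would prove something strictly stronger than the theorem: once you appeal to Bloch--Srinivas to reduce the required vanishing to the Albanese level, the hypothesis that some $(N,Z_N)$ is universally generating is no longer doing any work, and you would conclude that the pair $(M_0,Z_{M_0})$ from Theorem \ref{theomain} is \emph{always} universally generating. That would answer Question \ref{questioninteressante} affirmatively, which the paper leaves open. The paper's proof avoids this trap by never leaving the level of actual rational maps: it first redoes the construction of Theorem \ref{theomain} starting from $N$ to obtain a variety $N'$ \emph{together with a rational map} $\phi:N\dashrightarrow N'$ (Proposition \ref{prop1}), so that universal generation transfers from $N$ to $N'$ by composing correspondences with $\phi$; it then trims ${\rm Alb}(N')$ down to $J^3(X)$ via a further geometric construction (Proposition \ref{pro2}) that again produces a genuine rational map $\psi:N\dashrightarrow N''$ rather than merely an Albanese-level morphism. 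The existence of these honest rational maps is exactly what your abstract $\Gamma^{\vee}$ is missing.
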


\section{Proof of Theorem \ref{theomain}\label{secproofth}}
\subsection{Preliminary results}
Let $X$ be a rationally connected threefold. By  Theorem \ref{theogeotop}, we know that there exist  a
smooth projective curve $C$ and a codimension $2$-cycle $Z\in {\rm CH}^{2}(C\times X)$ such that
the  Abel-Jacobi map
$\Phi_Z: J(C)\rightarrow J^{3}(X)$ has connected fibers, that is, its kernel $K:={\rm Ker}\, \Phi_Z$ is an abelian variety.
 We will prove the following result
\begin{theo} \label{theonewform}  There exists  a smooth projective variety $M$ carrying a codimension  $2$ cycle $Z_M\in{\rm CH}^{2}(M\times X)$, with an inclusion
$j: C\hookrightarrow M$ such that

(1)  $\Phi_{Z_M}\circ j_*=\Phi_Z$ and

(2)
${\rm Ker}\, \Phi_Z= {\rm Ker}\,(j_*: J(C)\rightarrow {\rm Alb}(M))$.
\end{theo}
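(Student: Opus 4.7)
We want to enlarge $C$ to a smooth projective variety $M$ in which the abelian subvariety $K:=\ker\Phi_Z\subset J(C)$ is killed under $j_*:J(C)\to\mathrm{Alb}(M)$, and equipped with a codimension-$2$ cycle $Z_M\in\mathrm{CH}^2(M\times X)$ satisfying (1). One direction of (2), namely $\ker j_*\subseteq K$, is automatic from (1): if $j_*(\alpha)=0$ then $\Phi_Z(\alpha)=\Phi_{Z_M}(j_*(\alpha))=0$, whence $\alpha\in K$. The substance is therefore to produce $M$ containing $C$ with $K\subseteq\ker j_*$ together with a compatible $Z_M$.

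\textbf{Step 1 (Spreading $K$).} Apply Lemma \ref{levenantdeconiveau} with source variety $C$ and abelian subvariety $A=K\subset J(C)=\mathrm{Alb}(C)$. This yields a smooth projective variety $T$ together with a codimension-one cycle $\Gamma\in\mathrm{CH}^1(T\times C)$ such that $\Gamma_*:\mathrm{Alb}(T)\to J(C)$ is an isomorphism onto $K$. In other words, $\Gamma$ is a connected family of degree-$0$ zero-cycles on $C$ whose Abel--Jacobi classes sweep out $K$. The composed cycle $Z\circ\Gamma\in\mathrm{CH}^2(T\times X)$ then has $\Phi_{Z\circ\Gamma}=\Phi_Z\circ\Gamma_*=0$, since $\mathrm{image}(\Gamma_*)=K=\ker\Phi_Z$; combined with Theorem \ref{blochsri} this means every fiber of $Z\circ\Gamma$ over $T$ is rationally equivalent on $X$ to the fiber over the basepoint $t_0$.

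\textbf{Step 2 (Gluing $T$ to $C$ inside a smooth projective $M$).} Mirroring the spreading construction in the proof of Theorem \ref{theogeotop} in \cite{voisinconiveau}, build a smooth projective variety $M$ containing $C$ in which, for every $t\in T$, the $0$-cycle $j_*\Gamma_t$ is algebraically equivalent in $M$ to $j_*\Gamma_{t_0}$. Concretely, embed $C$ and $T$ into an ambient projective space and use an auxiliary construction---such as a projective bundle over $T\times C$ associated with $\mathcal{O}(\Gamma)$, or a blow-up attaching rational curves along the supports of the $\Gamma_t$---to produce a smooth connected projective $M$ that contains $C$ as a subvariety and witnesses the required algebraic equivalences. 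As a consequence $j_*\circ\Gamma_*=0:\mathrm{Alb}(T)\to\mathrm{Alb}(M)$, and since $\Gamma_*$ surjects onto $K$, we obtain $K\subseteq\ker j_*$.

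\textbf{Step 3 (Assembling $Z_M$ and main obstacle).} Using the rational triviality of the fibers of $Z\circ\Gamma$ from Step 1 as the gluing compatibility, one assembles $Z_M\in\mathrm{CH}^2(M\times X)$ out of $Z$ and an extension across the $T$-piece of $M$, whose restriction to $C\times X$ recovers $Z$ up to cycles rationally equivalent to zero on every fiber. This gives (1), and combined with Step 2 and the easy inclusion $\ker j_*\subseteq K$, it yields (2). The main obstacle is Step 2: building an explicit smooth projective $M$ containing $C$ in which precisely the cycles representing $K$ become algebraically trivial, while supporting a compatible codimension-$2$ cycle $Z_M$. The delicate balance between smoothness, connectedness and Albanese control of $M$ is the core technical point, and closely follows the geometric construction in \cite{voisinconiveau} used for Theorem \ref{theogeotop}.
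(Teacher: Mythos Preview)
Your proposal has a genuine gap: Step 2 is not a construction but a description of what you would like to construct. The suggestions you offer (a projective bundle over $T\times C$, or blow-ups attaching rational curves) are not worked out, and there is no argument that any of them would simultaneously (a) contain $C$, (b) kill $K$ in the Albanese, and (c) carry a codimension-$2$ cycle restricting to $Z$ on $C$. You explicitly acknowledge this is ``the main obstacle'' and ``the core technical point''; but that is precisely the content of the theorem, so the proposal as written does not constitute a proof.

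The paper's argument differs from yours in two essential ways. First, rather than introducing an auxiliary variety $T$ with $\mathrm{Alb}(T)\cong K$ and trying to kill all of $K$ at once, the paper chooses a \emph{single very general point} $x\in K$. By Lemma~\ref{profacilepourtuer}, it suffices to arrange $j_*(x)=0$ in $\mathrm{Alb}(M)$: since the cyclic group generated by $x$ is Zariski dense in $K$, this forces $K\subset\ker j_*$. This reduction is what makes the problem tractable. Second, and crucially, the variety $M$ is taken to be (a desingularization of a component of) a \emph{moduli space of stable maps to $X$}. This choice solves all three requirements at once: the cycle $Z_M$ is the universal evaluation map, the inclusion of $J(C)$ into $M$ is the classifying map of a family over $W_1\subset J(C)$, and compatibility (1) is automatic. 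The work (Theorem~\ref{theoauxi}) then goes into building, from the rational equivalence $Z_{J(C)*}(\{x\}-\{0_J\})=0$ in $\mathrm{CH}^2(X)$, a chain of families of stable maps over $W_1\cup W_2\cup W_3$ with $W_2,W_3$ simply connected, so that in the resulting moduli space the images of $x$ and $0_J$ are connected through simply connected pieces, forcing $j_*(x)=0$ in $\mathrm{Alb}(M)$.

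Your Step 1 is fine (and the observation $\Phi_{Z\circ\Gamma}=0$ together with Theorem~\ref{blochsri} is the same as the paper's Lemma~\ref{lenoyau}), but without the moduli-of-stable-maps idea there is no mechanism linking the vanishing in $\mathrm{CH}^2(X)$ to the construction of $M$ with a compatible $Z_M$.
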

\begin{claim}  Theorem \ref{theomain} follows from Theorem \ref{theonewform}.
\end{claim}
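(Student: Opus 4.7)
The plan is to deduce Theorem \ref{theomain} from Theorem \ref{theonewform} by combining it with the construction recalled below as Lemma \ref{levenantdeconiveau} (from \cite{voisinconiveau}), which, given a smooth projective variety $N$ and an abelian subvariety $A\subset {\rm Alb}(N)$, produces a smooth projective variety and a $0$-correspondence realizing $A$ as the isomorphic image of its Albanese variety.

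Starting from the pair $(C,Z)$ supplied by Theorem \ref{theogeotop}, I apply Theorem \ref{theonewform} to obtain a smooth projective variety $M_0$, a cycle $Z_{M_0}\in {\rm CH}^2(M_0\times X)$, and an inclusion $j:C\hookrightarrow M_0$ satisfying conditions (1) and (2). Set $A:=j_*(J(C))\subset {\rm Alb}(M_0)$; this is an abelian subvariety. Condition (2) gives $\ker(j_*:J(C)\rightarrow {\rm Alb}(M_0))=K:=\ker \Phi_Z$, so $j_*$ factors through an isomorphism $\bar j:J(C)/K\xrightarrow{\cong}A$. By Theorem \ref{theogeotop}, $\Phi_Z$ is surjective with connected kernel $K$, hence induces an isomorphism $\bar\Phi_Z:J(C)/K\xrightarrow{\cong}J^3(X)$. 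Condition (1), which reads $\Phi_{Z_{M_0}}\circ j_*=\Phi_Z$, then gives $\Phi_{Z_{M_0}}|_A\circ\bar j=\bar\Phi_Z$, so $\Phi_{Z_{M_0}}|_A:A\rightarrow J^3(X)$ is itself an isomorphism.

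To finish, I apply Lemma \ref{levenantdeconiveau} to the abelian subvariety $A\subset {\rm Alb}(M_0)$: it yields a smooth projective variety $M$ and a correspondence $\Gamma\in {\rm CH}^{\dim M_0}(M\times M_0)$ such that $\Gamma_*:{\rm Alb}(M)\rightarrow {\rm Alb}(M_0)$ is an isomorphism onto $A$. Setting $Z_M:=Z_{M_0}\circ \Gamma\in {\rm CH}^2(M\times X)$, the standard functoriality identity $\Phi_{Z_M}=\Phi_{Z_{M_0}}\circ \Gamma_*$ for compositions of correspondences expresses $\Phi_{Z_M}$ as the composition of the two isomorphisms $\Gamma_*:{\rm Alb}(M)\xrightarrow{\cong}A$ and $\Phi_{Z_{M_0}}|_A:A\xrightarrow{\cong}J^3(X)$, and so $\Phi_{Z_M}:{\rm Alb}(M)\rightarrow J^3(X)$ is an isomorphism, proving Theorem \ref{theomain}.

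The main obstacle is not in this reduction---which is essentially formal once conditions (1) and (2) are in hand---but in establishing Theorem \ref{theonewform} itself, which is where the real work lies. The only mildly delicate point inside the reduction is checking that the factorizations of $j_*$ and $\Phi_Z$ through $J(C)/K$ are isomorphisms (and not merely isogenies), which follows from the connectedness of $K$ guaranteed by Theorem \ref{theogeotop}; and invoking Lemma \ref{levenantdeconiveau} in order to pass from the abelian subvariety $A\subset {\rm Alb}(M_0)$ to a variety $M$ whose Albanese variety is exactly $A$.
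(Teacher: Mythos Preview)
Your proof is correct and follows essentially the same approach as the paper's: identify $A={\rm Im}\,j_*\subset{\rm Alb}(M_0)$ with $J^3(X)$ via $\Phi_{Z_{M_0}}|_A$ using conditions (1)--(2), then apply Lemma \ref{levenantdeconiveau} and compose the resulting correspondence with $Z_{M_0}$. The only differences are notational (your $M_0,M$ are the paper's $M,M'$) and that you spell out the factorization through $J(C)/K$ a bit more explicitly.
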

\begin{proof}  As ${\rm Ker}\, \Phi_Z= {\rm Ker}\,(j_*: J(C)\rightarrow {\rm Alb}(M))$, the image ${\rm Im}\,(j_*: J(C)\rightarrow {\rm Alb}(M))$ is isomorphic to $J^{3}(X)$.  It follows that $J^{3}(X)\cong {\rm Im}\,j_*\subset {\rm Alb}(M)$ is  realized as an abelian subvariety of $ {\rm Alb}(M)$ and we can then apply  the following
\begin{lemm} \label{levenantdeconiveau} (See \cite[Lemma 3.7]{voisinconiveau}.)  Let $M$ be a smooth projective variety and let $A\subset {\rm Alb}(M)$ be an abelian subvariety. Then there exist a smooth projective variety $M'$ and a $0$-correspondence $\Gamma\in{\rm CH}^{{\rm dim}\,M}(M'\times M)$, such that $\Gamma_*:{\rm Alb}(M')\rightarrow {\rm Alb}(M)$ is the composition of an  isomorphism between ${\rm Alb}(M')$  and $A$ and the inclusion of $A$ in ${\rm Alb}(M)$.
\end{lemm}
We apply Lemma \ref{levenantdeconiveau} to $A:=J^3(X)\subset {\rm Alb}(M)$. By Theorem \ref{theonewform},   the morphism
$\Phi_{Z_M}: {\rm Alb}(M)\rightarrow J^{3}(X)$ is an isomorphism when restricted to $J^{3}(X)\cong {\rm Im}\,j_* \subset {\rm Alb}(M)$, so we conclude that the cycle $Z_{M'}:=Z_M\circ \Gamma\in {\rm CH}^{2}(M'\times X)$ satisfies the conclusion of Theorem \ref{theomain}.
\end{proof}
    For the proof of Theorem \ref{theonewform}, we will  use the  following lemma.
\begin{lemm}  \label{profacilepourtuer}  Let $C\subset J(C)$ be a smooth projective curve imbedded in its Jacobian, and let $K\subset J(C)$ be an abelian subvariety. Let $x\in K$ be a very general point.
Let $M$ be a smooth projective variety containing the curve $C$ and let $j:C\hookrightarrow M$ be the   inclusion, inducing $j_*:J(C)\rightarrow {\rm Alb}(M)$. Assume
\begin{eqnarray}\label{eqvandansALBM}  {\rm alb}_M \circ j_{*}(x)=0\,\,{\rm in}\,\, {\rm Alb}\,M.
\end{eqnarray} Then
$ {\rm alb}_M\circ j_*$ vanishes on $K$.
\end{lemm}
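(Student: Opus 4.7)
The plan is to read the hypothesis as saying that a very general point $x$ of the abelian variety $K$ lies in the kernel of the induced morphism of abelian varieties $j_{*}|_K : K \to {\rm Alb}(M)$, and then to conclude by the standard rigidity of proper algebraic subgroups of an abelian variety under ``very general'' conditions.

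First I would clarify the notation and reformulate. Since the curve $C$ generates its Jacobian, every $x \in K \subset J(C)$ can be written as $x = {\rm alb}_C(D_x)$ for some $0$-cycle $D_x$ of degree $0$ on $C$. Two such lifts differ by a $0$-cycle rationally equivalent to zero on $C$; their pushforwards to $M$ are then rationally equivalent on $M$, so the class ${\rm alb}_M(j_{*}D_x) \in {\rm Alb}(M)$ depends only on $x$ and equals $j_{*}(x)$, where $j_{*}: J(C) \to {\rm Alb}(M)$ is the induced homomorphism of abelian varieties. Consequently the hypothesis (\ref{eqvandansALBM}) amounts to $j_{*}(x) = 0$ in ${\rm Alb}(M)$, and the desired conclusion becomes: $j_{*}$ vanishes identically on $K$.

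The main, and essentially only, step is then the following observation. The restriction $j_{*}|_K : K \to {\rm Alb}(M)$ is a morphism of abelian varieties, hence its kernel $H$ is a closed algebraic subgroup of $K$, i.e.\ a finite union of translates of its identity component, itself an abelian subvariety of $K$. In particular, if $j_{*}|_K \not\equiv 0$ then $H \subsetneq K$ is a proper Zariski-closed subvariety. By the standard meaning of ``very general'' --- a point of $K$ lying outside a fixed countable union of proper Zariski-closed subvarieties of $K$ --- such a point cannot belong to any proper closed algebraic subgroup. The hypothesis $j_{*}(x) = 0$ therefore forces $H = K$, i.e.\ $j_{*}|_K \equiv 0$, which is the desired conclusion.

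There is no real obstacle here: once the slight notational ambiguity between ${\rm alb}_M \circ j_{0*}$ and $j_{*}$ is resolved, the statement reduces to the formal fact that a homomorphism of abelian varieties whose kernel contains a very general point of its source must vanish identically. The lemma is ``easy'' in this sense; its role in the paper is presumably to package this rigidity in a form convenient for the proof of Theorem \ref{theonewform}.
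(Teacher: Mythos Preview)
Your proof is correct and follows essentially the same approach as the paper's. The paper's two-line argument observes that since $x$ is very general in $K$, the cyclic subgroup $\langle x\rangle$ is Zariski dense in $K$, so the homomorphism $j_*$, vanishing on $\langle x\rangle$, vanishes on $K$; your argument via the kernel $H=\ker(j_*|_K)$ being a proper closed subgroup that a very general point must avoid is exactly the contrapositive of this, and your preliminary unpacking of the notation ${\rm alb}_M\circ j_{0*}$ as the induced homomorphism $j_*:J(C)\to{\rm Alb}(M)$ is a helpful clarification the paper leaves implicit.
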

\begin{proof} As $x$ is very general in $K$, the subgroup generated by $x$ is Zariski dense in $K$. As  $ {\rm alb}_M \circ j_{*}$ vanishes on this subgroup, it vanishes on $K$.
\end{proof}

\subsection{Proof of Theorem \ref{theonewform} \label{secprooftheonewform}}
Let $X$ be a rationally connected threefold and let
 $C$ be a smooth projective curve, equipped with a   codimension $2$-cycle $Z_C\in {\rm CH}^{2}(C\times X)$ such that
the  Abel-Jacobi map
$\Phi_Z: J(C)\rightarrow J^{3}(X)$ has connected fibers.
In order to construct a pair  $(M, Z_M)$ satisfying the conclusion of Theorem \ref{theonewform}, we will first use the following observation,  already used in \cite{voisinJAG} and also in \cite{voisinconiveau}.
\begin{lemm}  \label{lenoyau} There exists a codimension $2$-cycle $Z_{J(C)}$ on $J(C)\times X$, such that
\begin{eqnarray}\label{equationJCC} \Phi_{Z_{J(C)}}=\Phi_Z: J(C)\rightarrow J^3(X).\end{eqnarray}
Furthermore, for any $x\in K={\rm Ker}\,(\Phi_Z:J(C)\rightarrow J^{3}(X))$, one has
\begin{eqnarray} \label{eqvantardive}  Z_{J(C)*}(\{x\}-\{0_J\})=0\,\,{\rm in}\,\, {\rm CH}^2(X).
\end{eqnarray}
\end{lemm}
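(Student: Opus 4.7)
My plan is to obtain part (2) as a formal consequence of part (1) together with the Bloch--Srinivas theorem~\ref{blochsri}, so that all the geometric content sits in the construction carried out for (1). That construction will proceed by spreading the cycle $Z$ from $C\times X$ to $J(C)\times X$ via the sum map $\sigma_N\colon C^N\to J(C)$, exploiting the fact that $\Phi_Z$ is a morphism of abelian varieties.

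For (2), suppose (1) has been established. Given $x\in K$, the $1$-cycle $Z_{J(C)*}(\{x\}-\{0_J\})=Z_{J(C),x}-Z_{J(C),0}$ lies in ${\rm CH}^2(X)_{\rm alg}$, because $J(C)$ is connected, and its Abel--Jacobi image is $\Phi_{Z_{J(C)}}(x)=\Phi_Z(x)=0$ by (1) and the assumption $x\in K$. Since $X$ is rationally connected, Theorem~\ref{blochsri} guarantees that $\Phi_X^2\colon {\rm CH}^2(X)_{\rm alg}\to J^3(X)$ is an isomorphism, so $Z_{J(C),x}-Z_{J(C),0}=0$ in ${\rm CH}^2(X)$, giving~(2).

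For (1), fix $c_0\in C$ and let $\alpha_0\colon C\hookrightarrow J(C)$, $c\mapsto [c]-[c_0]$, be the associated Abel--Jacobi embedding. For $N\geq g(C)$ the sum map $\sigma_N\colon C^N\to J(C)$, $(c_1,\ldots,c_N)\mapsto \sum_i\alpha_0(c_i)$, is surjective. On $C^N\times X$ I form the codimension $2$ cycle
$$Z^{(N)}:=\sum_{i=1}^N p_i^*\bigl(Z-\pi_X^*Z_{c_0}\bigr)\in {\rm CH}^2(C^N\times X),$$
where $p_i\colon C^N\times X\to C\times X$ is the projection onto the $i$-th $C$-coordinate together with $X$, and $\pi_X\colon C\times X\to X$ is the second projection. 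The fibre of $Z^{(N)}$ over $(c_1,\ldots,c_N)$ is $\sum_i(Z_{c_i}-Z_{c_0})$, algebraically trivial on $X$ with Abel--Jacobi image $\sum_i \Phi_Z(\alpha_0(c_i))=\Phi_Z(\sigma_N(c_1,\ldots,c_N))$ by linearity of $\Phi_Z$. Cutting $C^N$ by general ample hypersurfaces, pick a smooth $g$-dimensional complete intersection $T\subset C^N$ on which $\sigma_N|_T\colon T\to J(C)$ is generically finite of some degree $d$, and set $\widetilde Z_{J(C)}:=(\sigma_N|_T\times {\rm id}_X)_*(Z^{(N)}|_{T\times X})\in {\rm CH}^2(J(C)\times X)$. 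A direct fibrewise computation, summing over the $d$ points of $\sigma_N^{-1}(a)\cap T$ for general $a\in J(C)$, yields $\Phi_{\widetilde Z_{J(C)}}=d\cdot \Phi_Z$.

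To upgrade this to the asserted equality $\Phi_{Z_{J(C)}}=\Phi_Z$ with $\mathbb{Z}$-coefficients, I would perform two such spreadings, obtaining cycles $\widetilde Z^{(1)}_{J(C)}$ and $\widetilde Z^{(2)}_{J(C)}$ with degrees $d_1,d_2$ satisfying $\gcd(d_1,d_2)=1$ by varying $N$ and the polarizations used to define the two complete intersections, and then take the integer combination $Z_{J(C)}:=a_1\widetilde Z^{(1)}_{J(C)}+a_2\widetilde Z^{(2)}_{J(C)}$ with $a_1d_1+a_2d_2=1$. The main obstacle I expect is precisely this coprimality step: one must verify that the set of attainable degrees $d$ generates $\mathbb{Z}$, which amounts to a careful intersection-theoretic computation of $\deg(\sigma_N|_T)$ as $N$ and $T$ vary (if one is willing to allow $\mathbb{Q}$-coefficients, or equivalently to pass via the universal cycle on $J^3(X)\times X$ of Remark~\ref{remapbplace}, this step can be bypassed by simply dividing by $d$).
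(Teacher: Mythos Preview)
Your argument for part (2) is correct and coincides with the paper's: once \eqref{equationJCC} is known, the vanishing \eqref{eqvantardive} is an immediate consequence of Theorem~\ref{blochsri}.

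For part (1), however, your construction is unnecessarily roundabout and the coprimality step you flag is a genuine gap that you do not close. There is no reason to expect that by varying $N$ and the polarizations defining $T\subset C^N$ one can arrange two degrees $d_1,d_2$ with $\gcd(d_1,d_2)=1$; the attainable degrees could all be divisible by a fixed integer, and you give no argument ruling this out. So as written your proof only yields a cycle $Z_{J(C)}$ with $\Phi_{Z_{J(C)}}=d\,\Phi_Z$ for some $d\geq 1$, which is strictly weaker than \eqref{equationJCC} and would not suffice for the subsequent applications in the paper (where the exact equality $\Phi_{Z_{J(C)}}=\Phi_Z$ is used).

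The paper bypasses this difficulty entirely with a much simpler and classical device: take $N=g=g(C)$ and, instead of cutting $C^g$ by hypersurfaces, observe that the cycle $\sum_i \mathrm{pr}_i^* Z$ on $C^g\times X$ is $\mathfrak{S}_g$-invariant and hence descends to a cycle on $C^{(g)}\times X$. The Abel--Jacobi map $C^{(g)}\to J(C)$ is \emph{birational} (a general degree-$g$ line bundle has a unique effective section), so pushing forward along this birational map produces $Z_{J(C)}$ with $\Phi_{Z_{J(C)}}=\Phi_Z$ on the nose, with no spurious degree factor to eliminate. This is exactly the missing idea in your construction: rather than seeking a generically finite $T\to J(C)$ of unknown degree inside $C^N$, use the canonical birational model $C^{(g)}$ of $J(C)$.
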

\begin{proof} The  cycle $Z_{J(C)}$  is obtained by seeing $J(C)$ as birational to $C^{(g)}$, $g:=g(C)$, and by descending to $C^{(g)}\times X$ the  cycle
$$ \sum_i{\rm pr}_i^*Z_C$$
which is invariant under the natural action of the  symmetric group $\mathfrak{S}_g$  on $C^g\times X$, where in the formula above, the  ${\rm pr}_i:C^g\times X\rightarrow C\times X$ are the obvious projections. This provides a cycle in $C^{(g)}\times X$ which we further descend to $J(C)\times X$ via the birational morphism $C^{(g)}\rightarrow J(C)$. By construction of  $Z_{J(C)}$, one has
for a general element  $x_\cdot=x_1+\ldots+x_g\in C^{(g)}$,
\begin{eqnarray} \label{eqvantardivell}Z_{J(C)*}(x_{\cdot})=Z_{x_1}+\ldots+Z_{x_g}\,\,{\rm in}\,\,{\rm CH}^2(X).
\end{eqnarray}
By applying  $\Phi_X$ to (\ref{eqvantardivell}), we get (\ref{equationJCC}). It follows from (\ref{equationJCC})  that
$$\Phi_X( Z_{J(C)*}(\{x\}-\{0_J\}))=0\,\,{\rm in }\,\,J^3(X),$$
which implies (\ref{eqvantardive}) by Theorem \ref{blochsri}.
\end{proof}

We will prove   the following theorem in Section \ref{secauxi}. The proof is in fact very similar to that of  Claim 2.13  of \cite{voisinconiveau}. We choose as before a very general point $x\in K={\rm Ker}\,\Phi_Z={\rm Ker}\,\Phi_{Z_J}$.
\begin{theo}\label{theoauxi} There exist  a singular quasiprojective variety  $W:=W_1\cup W_2\cup W_3$, where
$W_1$ is a Zariski open set of $J(C)$ containing $x$ and $0_J$,  and a diagram
 \begin{eqnarray}\label{eqcorresp} \begin{matrix}
 & \Sigma&\stackrel{f}{\rightarrow}& X
 \\
&p\downarrow& &
\\
&W& &
\end{matrix}
\end{eqnarray}
with the following properties.
\begin{enumerate}
\item \label{item1} $p$ is proper flat of relative dimension $1$ with semistable fibers.
\item \label{item2} The restriction to $W_1\subset  W$ of the correspondence (\ref{eqcorresp}) has for induced Abel-Jacobi map $W_1\rightarrow J^3(X)$  the restriction to $W_1$ of the morphism $\Phi_{Z_J}$.
\item \label{item3} $W_2$ is isomorphic to a Zariski open set of $\mathbb{P}^1$, and  $W_3$ is Zariski open in a smooth projective connected and simply connected variety. The open subset  $W_2\subset \mathbb{P}^1$ contains the points $0,\,\infty \in \mathbb{P}^1$ and  intersects $J(C) $ in $0_J\in J(C)$ and $W_3$ in a point $w_2\in W_3$, at the  respective points  $0,\,\infty\in W_2$. $W_3$ intersects $J(C)$  at the point $x\in J(C)$, and we denote by $w_3\in W_3$ the  corresponding point.
\item \label{4} The fibers $f_t:\Sigma_t\rightarrow X$ are stable maps which are unobstructed at the three   singular points $0_J,\,x,\,w_2$ of $W=W_1\cup W_2\cup W_3$.
\end{enumerate}
\end{theo}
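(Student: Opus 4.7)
The starting point is Lemma \ref{lenoyau}: since $x\in K={\rm Ker}\,\Phi_Z$, the identity $Z_{J(C)*}(\{x\}-\{0_J\})=0$ in ${\rm CH}^2(X)$ says that the two $1$-cycles $Z_x:=Z_{J(C)*}(\{x\})$ and $Z_0:=Z_{J(C)*}(\{0_J\})$ in $X$ are rationally equivalent. The plan is to realize this rational equivalence geometrically as a chain of three families of semistable curves in $X$, parameterized respectively by $W_1\subset J(C)$, $W_2\subset\mathbb{P}^1$, and $W_3$ an open subset of a smooth projective simply connected variety, glued along their common fibers at $0_J$, $w_2$, and $x$.

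First I would construct the family over $W_1$. Using the birational identification of $J(C)$ with $C^{(g)}$ from the proof of Lemma \ref{lenoyau}, the cycle $Z_{J(C)}$ is represented tautologically over a Zariski open of $C^{(g)}$ by pulling back the incidence curves via the $g$ projections. After shrinking to a Zariski open $W_1\subset J(C)$ containing $\{0_J,x\}$ on which this representation is well-defined, and after adjoining sufficiently many very free rational tails (available since $X$ is rationally connected), one obtains an effective flat family $\Sigma_1\to W_1$ of semistable curves mapping to $X$ as stable maps, whose induced Abel-Jacobi map is the restriction of $\Phi_{Z_J}$.

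Next I would use the rational equivalence $Z_x\sim Z_0$ in ${\rm CH}^2(X)$ to build $W_2$ and $W_3$. A $2$-cycle on $\mathbb{P}^1\times X$ realizing a portion of this equivalence gives the pencil $W_2$, arranged so that the fiber over $0\in\mathbb{P}^1$ coincides with the fiber of $\Sigma_1$ at $0_J$, and the fiber over $\infty$ is a chosen stable map $\Sigma|_{w_2}$. The remaining rational equivalence, linking $\Sigma|_{w_2}$ to $\Sigma_1|_x$, is realized by a family over an open subset of a suitable moduli space of stable maps to $X$: after adjoining enough very free rational tails to both endpoints, the relevant component of the moduli becomes smooth, projective, and rationally connected by the Graber--Harris--Starr theorem, hence simply connected; this is the compactification of $W_3$.

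Gluing these three families along their agreeing fibers at $0_J$, $w_2$, and $x$ produces the diagram asserted in the theorem. The main technical obstacle is the unobstructedness requirement at the three nodes of $W$, which translates into the vanishing of certain $H^1$ groups controlling deformations of the three gluing stable maps and is what ensures that $p$ can be taken to be flat across the nodes. Following closely the strategy of Claim $2.13$ of \cite{voisinconiveau}, this is arranged by ensuring that each of the three gluing fibers contains sufficiently many very free rational components, which is a standard consequence of the deformation theory of stable maps to rationally connected threefolds.
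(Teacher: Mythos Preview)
Your overall plan matches the paper's, but there is a genuine gap in your construction of $W_3$. You propose to take $W_3$ inside a component of the moduli of stable maps to $X$ and to deduce its simple connectedness from rational connectedness via Graber--Harris--Starr. This does not work: there is no general mechanism making a component of $\overline{M}_g(X,\beta)$ rationally connected just because $X$ is, and GHS concerns sections of fibrations, not moduli. The paper's actual mechanism for $W_3$ is quite different and rests on a specific geometric fact you have not identified: the $\mathbb{P}^1$-family $W_2$ coming from the rational equivalence does \emph{not} interpolate directly between $\mathcal{C}_{1,0_J}$ and $\mathcal{C}_{1,x}$. Its fibers over $0$ and $\infty$ are $\mathcal{C}_{1,0_J}\cup A$ and $\mathcal{C}_{1,x}\cup B$, where $f_A:A\to X$ and $f_B:B\to X$ are two \emph{different partial normalizations of the same nodal curve} $E\subset X$ (Lemma~\ref{lesecondstep}). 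The r\^ole of $W_3$ is then to deform $\mathcal{C}_{1,x}\cup B$ to $\mathcal{C}_{1,x}\cup A$; this is done by embedding $E\cup\mathcal{C}_{1,x}$ in a smooth simply connected surface $S\subset X$ (complete intersection of ample hypersurfaces, simply connected by Lefschetz since $X$ is) and parameterizing curves in a fixed linear system on $S$ with $n$ marked nodes by a projective bundle over $S^{[n]}$ (Sublemma~\ref{sublemmhel}). It is the simple connectedness of $S$, hence of $S^{[n]}$ and of this projective bundle, that gives the required simple connectedness of the compactification of $W_3$.

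You are also missing an entire step: even after $W_1,W_2,W_3$ are built, their special fibers do not literally coincide (they differ by the extra components $A$, $R$, etc.), so one cannot glue them directly. The paper fixes this by a separate attachment lemma (Lemma~\ref{ledattachement}) which adds, in families and without changing the Abel--Jacobi map, curves in a fixed linear system on an auxiliary surface so that the corresponding special fibers become identical. Finally, your treatment of unobstructedness is too quick: adding very free rational tails alone does not suffice to kill $H^1$ of the normal bundle on the whole nodal curve; the paper first attaches complete intersection curves \`a la Hartshorne--Hirschowitz to make the normal bundle positive on the original component, and only then adds very free rational curves to the new components (Proposition~\ref{propunobstructed}).
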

 The statement and construction would be  slightly clearer if $W_2\cup W_3$ were a chain of rational curves meeting $W_1$ in the two points $0_J$ and $x$. Unfortunately  we have not been able to achieve this. The heuristic proof of Theorem \ref{theoauxi}, \ref{item1} to \ref{item3} is the fact that, by Lemma \ref{lenoyau}, we know that $Z_{J(C)*}(\{x\}-\{0_J\})=0$ in ${\rm CH}_1(X)$. Assume naively that this vanishing  translates into the existence of
  a family $Z'$ of curves parameterized by $\mathbb{P}^1$ such that $Z'_0=Z_{0_J},\,Z'_\infty=Z_{x}$, then we would take
 $W_2=\mathbb{P}^1$, $W_3=\emptyset$ and the correspondence $\Sigma$ would be given by $Z_J$ on $W_1$, and $Z'$ on $\mathbb{P}^1$, glued to $J$ via $0=0_J,\,x=\infty$. To achieve \ref{4}, one uses the Koll\'ar-Miyaoka-Mori technic \cite{komimo} which allows to make curves unobstructed by adding trees of very free rational curves. One has to show that this can be done in families at least generically, and without changing the Abel-Jacobi map.

We will explain the technical details of the proof of Theorem \ref{theoauxi} in next section and we  now prove Theorem \ref{theonewform} assuming Theorem  \ref{theoauxi}.
\begin{proof}[Proof of Theorem \ref{theonewform}] We construct the smooth projective variety $M$ and the cycle $Z_M$ starting from the universal deformation of the family of stable maps $f_t: \Sigma_t\rightarrow X,\,t\in W$ appearing in Theorem \ref{theoauxi}. More precisely, by Theorem \ref{theoauxi}, \ref{4},  we conclude that this  deformation space $M_0$  is smooth near  the three singular points $0_J,\,x,\,w_2$ and more precisely, using the fact that $J(C),\,\mathbb{P}^1$ and $W_3$ are smooth and connected, $M_0$ has exactly one connected component  $M_1$ which  contains the image of $W$ under  the classifying map  associated with the family of stable maps (\ref{eqcorresp}) and is smooth near $0_J,\,x,\,w_2$. It follows  that a smooth projective desingularization $M$ of  $M_1$  contains a union
\begin{eqnarray}\label{eqbiratWWW} \widetilde{J(C)} \cup \widetilde{W_2}\cup \widetilde{W_3}
\end{eqnarray}
where the  varieties  $\widetilde{J(C)} $, $\widetilde{W_2}$ and $ \widetilde{W_3}$ are smooth projective birational to $J(C) $, $W_2$ and $W_3$ respectively, and the union (\ref{eqbiratWWW}) is  isomorphic to $J(C)\cup W_2\cup W_3$ near  the intersection points  $0_J$, $x$ and $w_2$.
The cycle $Z_{M}$ is given over a dense  Zariski open set of $M$  by the universal evaluation map

\begin{eqnarray}\label{eqcorrespM} \begin{matrix}
 & \Sigma_M&\stackrel{f_M}{\rightarrow}& X
 \\
&p\downarrow& &
\\
&M& &.
\end{matrix}
\end{eqnarray}

We now claim that  the morphism $j_{1*}:J(C)={\rm Alb}(\widetilde{J(C)})\rightarrow {\rm Alb}(M)$ induced by  the inclusion $j_1$ of $\widetilde{J(C)}$ in $M$  constructed above satisfies
\begin{eqnarray}\label{eqjx=0} j_*(x)=0.\end{eqnarray}
Indeed, denoting by $j_2:\widetilde{W_2}\rightarrow M$ and $j_3:\widetilde{W}_3\rightarrow M$ the restrictions of $f$  we have ${\rm alb}_M(j_{1}(x)-j_3(w_2))=0$ because $j_1(x)=j_3(w_3)$ and $\widetilde{W}_3$ is simply connected. Similarly,
${\rm alb}_M(j_{1}(0_J)-j_3(w_2))=0$ because $j_1(0_J)=j_2(0)$, $j_3(w_2)=j_2(\infty)$ and $\widetilde{W_2}\cong \mathbb{P}^1$ is simply connected. Thus
${\rm alb}_M(j_{1}(x)-j_{1}(0_J))=0$, which
 proves the claim.

We now apply Lemma  \ref{profacilepourtuer} to the inclusion $j$ of $\widetilde{J(C)}$ in $M$ and conclude that
$K={\rm Ker}\,\Phi_Z$ is contained in ${\rm Ker}\,(j_*:J(C)\rightarrow {\rm Alb}(M))$. Finally, as
$\Phi_{Z_M}\circ j_*=\Phi_{Z_J}$  by statement \ref{item2} of Theorem \ref{theoauxi},  we also  have  ${\rm Ker}\,(j_*:J(C)\rightarrow {\rm Alb}(M))\subset K$ and
   the proof of Theorem \ref{theonewform} is concluded.
\end{proof}
\subsection{Proof of Theorem \ref{theoauxi} \label{secauxi}}

We describe in this section the proof of Theorem \ref{theoauxi}. It is obtained by a minor modification of the proof of Theorem \ref{theogeotop}  of  \cite{voisinconiveau}. This proof being rather intricate, we will   sketch its main steps, explaining  where and how   it has to be modified in order  to  give  a proof of Theorem \ref{theoauxi}.
We recall  that Lemma \ref{lenoyau} provides us with a codimension $2$ cycle $Z_{J(C)}$ on $J(C)\times X$ with the property that
${\rm Ker}\,(\Phi_{Z_{J(C)}}: J(C)\rightarrow J^3(X))$ is an abelian variety $K$. The cycle $Z_{J(C)}$ can be modified by adding a constant $1$-cycle in $X$ and we can thus assume, by choosing  a rationally equivalent representative, that over a Zariski open set $W_1$ of $J(C)$,  the fiber  $Z_t,\,t\in W_1$,  is  a smooth curve in $X$.  We can also arrange that the  Zariski open set $W_1$ contains the  two points $x,\,0_J\in K\subset J(C)$.  The cycle $Z_{J(C)}$ is thus represented over $W_1$ by a family of smooth curves (or stable maps)

\begin{eqnarray} \label{eqstablemap1}   \begin{matrix}
 & \mathcal{C}_1 &\stackrel{f_1}{\rightarrow}& X
 \\
&p_1\downarrow& &
\\
&W_{1}& &
\end{matrix}
\end{eqnarray}
in $X$.

\vspace{0.5cm}

{\bf Step 1}. {\it Constructing $W_2$  and the   family of stable maps over it.}  By assumption, $x\in {\rm Ker}\,(\Phi_{Z_J}:{\rm Alb}(J(C))=J(C)\rightarrow J^3(X))$, which gives  $$\Phi_{Z_J}({\rm alb}_{J(C)} (\{x\}-\{0_J\}))=\Phi_X(\mathcal{C}_{1,x}-\mathcal{C}_{1,0_J})=0.$$
 We thus get, using Theorem \ref{blochsri},  that the $1$-cycle
$\mathcal{C}_{1,x}-\mathcal{C}_{1,0_J}$ is rationally equivalent to $0$ in $X$.
This rational equivalence relation  is analyzed in \cite{voisinconiveau} and  this leads to the following statement, possibly after replacing $X$ by $X\times \mathbb{P}^r$ or a blow-up of $X$.
\begin{lemm} \label{lesecondstep}There exists  a family of stable maps
\begin{eqnarray} \label{eqstablemap2}   \begin{matrix}
 & \mathcal{C}_2 &\stackrel{f_2}{\rightarrow}& X
 \\
&p_2\downarrow& &
\\
&W_2& &
\end{matrix}
\end{eqnarray}
 parameterized by a Zariski open set $W_2\subset \mathbb{P}^1$ containing $0$ and $\infty$, satisfying  the following properties :
 \begin{enumerate}
  \item The fiber of $(p_2,f_2)$ over $0\in W_2$ is the union of  the fiber $f_{1,0_J}:\mathcal{C}_{1,0_J}\rightarrow X$ of the family (\ref{eqstablemap1}) over $0_J\in J(C)$ and  a stable map  $f_A:=f_{2\mid A}:A\rightarrow X$ glued at finitely many points. The fiber
 of $(p_2,f_2)$ over $\infty\in W_2$ is the union of  the fiber $f_{1,x}:\mathcal{C}_{1,x}\rightarrow X$ of the family (\ref{eqstablemap1}) over $x\in J(C)$ and  a stable map  $f_B:=f_{2\mid B}:B\rightarrow X$ glued at finitely many points.

 \item \label{refitem2} The two maps   $f_A: A\rightarrow X$ and $f_B:B \rightarrow X$ have the same image in $X$. More precisely, they are different partial normalizations of the same   nodal curve $E$ contained  in $X$.
     \end{enumerate}

\end{lemm}

{\bf Step 2.}    {\it Deforming $\mathcal{C}_{1,x}\cup B$ to $\mathcal{C}_{1,x}\cup A$}. The two curves or rather morphisms $$f_A:A\rightarrow X,\,f_B:B\rightarrow X$$ and their respective  attachments to the curves $\mathcal{C}_{1,0_J}$ and $\mathcal{C}_{1,x}$  are not equal as (pointed)  stable maps, although, by item \ref{refitem2} above,  they have the same support. They  may  differ combinatorially (for example the number of attachments points of $A$ and $B$  to the respective fibers  $\mathcal{C}_{1,0_J}$ and $\mathcal{C}_{1,x}$ may differ) and this  is why we cannot consider that the two fibers  $f_{2,0}: \mathcal{C}_{2,0}\rightarrow X$ and  $f_{2,\infty}: \mathcal{C}_{2,\infty}\rightarrow X$   are obtained by attaching the {\it same}  curve to the respective fibers  $f_{1,0_J}: \mathcal{C}_{1,0_J}\rightarrow X,\, f_{1,x}: \mathcal{C}_{1,x}\rightarrow X$ by the same number of points. The role of the family $\mathcal{C}_3\rightarrow W_3$  is  to correct this problem  by providing a deformation of  the curve $\mathcal{C}_{1,x}\cup B$ appearing in  the previous step to  a curve $\mathcal{C}_{1,x}\cup A$. The precise statement is      as follows.

\begin{lemm}   \label{lethirdstep}There exist  a family of stable maps
\begin{eqnarray} \label{eqstablemap3}   \begin{matrix}
 & \mathcal{C}_3 &\stackrel{f_3}{\rightarrow}& X
 \\
&p_3\downarrow& &
\\
&W_3& &
\end{matrix}
\end{eqnarray}
 parameterized by a Zariski open set $W_3$  in a smooth projective connected and simply connected variety, and two points $w_2,\,w_3\in W_3$  satisfying  the following properties :
 \begin{enumerate}
  \item   \label{curveR1} The fiber $f_{3,w_2}:\mathcal{C}_{3,w_2}\rightarrow X$ of the family (\ref{eqstablemap3})  over $w_2$ is the union of  the fiber $f_{2,\infty}:\mathcal{C}_{2,\infty}=\mathcal{C}_{1,x}\cup B\rightarrow X$ of the family (\ref{eqstablemap2}) over $\infty \in \mathbb{P}^1$ and  a curve $R\hookrightarrow X$, glued at finitely many points.

      \item \label{curveR} The fiber $f_{3,w_3}:\mathcal{C}_{3,w_3}\rightarrow X$
 of the family   (\ref{eqstablemap3}) over $w_3$ is the union of  the fiber $f_{1,x}:\mathcal{C}_{1,x}\rightarrow X$ of the family (\ref{eqstablemap1}) over $x\in J(C)$, the curve  $f_A:A\rightarrow X$ glued at finitely many points, and the same  curve $R\hookrightarrow X$ as in \ref{curveR1}, which is glued to the union $\mathcal{C}_{1,x}\cup A $  at finitely many points.

 \item\label{itemcondpoint1} The number of attachment points of $R\cup B$ to $\mathcal{C}_{1,x}$ in (\ref{curveR1}) equals the number of attachment points of $R\cup A$ to $\mathcal{C}_{1,x}$ in (\ref{curveR}).

 \item\label{itemcondpoint2} The number of attachment points of $R$ to $\mathcal{C}_{1,x}\cup B$ in (\ref{curveR1}) equals the number of attachment points of $R$ to $\mathcal{C}_{1,x}\cup A$ in (\ref{curveR}).

     \end{enumerate}

\end{lemm}

Note that we asked above  that $W_3$ is Zariski open in a  simply connected smooth projective variety. In \cite{voisinconiveau}, we just asked   the property that the Abel-Jacobi map of the family (\ref{eqstablemap3} ) is trivial and Lemma \ref{lethirdstep} was stated and proved  in  \cite{voisinconiveau}  in this slightly weaker form, but the same proof  shows in fact  the stronger statement.   Lemma \ref{lethirdstep} may seem technical but it is an immediate application of  the following simple fact.

\begin{sublemm}\label{sublemmhel} Let $\Gamma\subset S$ be a nodal curve in a smooth simply connected  surface. Let
$g_1:\Gamma_1\rightarrow S,\,g_2:\Gamma_2\rightarrow S$ be the two stable maps obtained by normalizing $\Gamma$ along a subset $N_1\subset \Gamma$ (resp.  $N_2\subset \Gamma$) of nodes, where $N_1$ and $N_2$ have the same cardinality. Then, up to replacing $\Gamma$ by a union  $\Gamma \cup R$, for some smooth curve $R\stackrel{i}{\hookrightarrow} S$ in general position, there
is a family of stable maps parameterized by a Zariski open set in a  connected simply connected projective variety, which has $g'_1=(g_1,i):\Gamma_1\cup R\rightarrow S$ and $g'_2=(g_2,i):\Gamma_2\cup R\rightarrow S$ as fibers.
\end{sublemm}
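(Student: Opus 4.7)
The plan is to realize both $g'_1$ and $g'_2$ as two distinct smooth points of a simply connected smooth projective variety $M$, obtained as a desingularization of the component of the moduli space of stable maps $\overline{M}_g(S, D)$ parameterizing normalizations of $n$-nodal curves in $|D|$, where $D := [\Gamma] + [R]$ and $g := p_a(D) - n$. Simple connectivity of $M$ will come from a birational dominance by a projective bundle over the symmetric product $S^{(n)}$, combined with the fact that $S$ (hence $S^{(n)}$) is simply connected.

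I first choose $R$ smooth in a sufficiently positive very ample linear system, meeting $\Gamma$ transversally at smooth points, and general. For $R$ positive enough, $H^1(S, N_{\Gamma \cup R/S}) = 0$, so $[\Gamma \cup R]$ is a smooth point of $|D|$ with all nodes independently smoothable, each $g'_i$ is unobstructed as a stable map, and the Severi variety $V_n \subset |D|$ of curves with exactly $n$ nodes is irreducible of dimension $\dim|D| - n$. Points of $V_n$ correspond bijectively to stable maps of genus $g$ and class $D$ via normalization. The closure $\overline{V_n}$ decomposes locally at $[\Gamma \cup R]$ into branches indexed by the $n$-subsets of the nodes of $\Gamma \cup R$; approaching $[\Gamma \cup R]$ along the branch for $N_i \subset \mathrm{Nodes}(\Gamma) \subset \mathrm{Nodes}(\Gamma \cup R)$ via a one-parameter smoothing, the associated family of stable maps (normalizations at the $n$ preserved nodes of the generic member) has limit exactly $g'_i$. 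Thus both $[g'_1]$ and $[g'_2]$ belong to a single irreducible component $\mathcal{N}$ of $\overline{M}_g(S, D)$ birational to $\overline{V_n}$ via the image map.

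To establish simple connectivity, I consider the incidence variety
\[
I_n = \{(C, \{p_1, \ldots, p_n\}) \in |D| \times S^{(n)} : C \text{ is singular at each } p_j\}.
\]
For $R$ sufficiently positive, the projection $I_n \to S^{(n)}$ is generically a projective bundle with fiber $\mathbb{P}(H^0(\mathcal{O}_S(D) \otimes \mathcal{I}_{p_1}^2 \cdots \mathcal{I}_{p_n}^2))$ of constant dimension $h^0(\mathcal{O}_S(D)) - 3n - 1$, while the projection $I_n \to |D|$ is birational onto $\overline{V_n}$. Since $S$ is simply connected, $\pi_1(S^{(n)}) = H_1(S, \mathbb{Z}) = 0$; hence a smooth projective model of $I_n$, being birational to a projective bundle over $S^{(n)}$, is simply connected. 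By birational invariance of $\pi_1$ for smooth projective varieties, a smooth projective desingularization $\widetilde{V_n}$ of $\overline{V_n}$ is simply connected, and so is a smooth projective resolution $M$ of $\mathcal{N}$. On $M$, the lifts of the distinct branches at $[\Gamma \cup R]$ are separated, giving $[g'_1]$ and $[g'_2]$ as two distinct smooth points. Restricting the universal family of stable maps (which exists over the unobstructed locus containing these points) to any Zariski open $U \subset M$ containing both yields the family required.

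The main obstacle is the irreducibility of $\overline{V_n}$, needed to place $[g'_1]$ and $[g'_2]$ in a single component of the moduli space. Classical for plane curves via Harris's theorem, this requires a careful positivity argument for general simply connected surfaces, which motivates choosing $R$ in a sufficiently positive linear system; a secondary technical point is the global verification of the projective bundle structure of $I_n \to S^{(n)}$, whose fiber dimension can jump over the discriminant but is constant on a dense open subset, enough for the birational simple-connectivity argument.
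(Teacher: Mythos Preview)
Your core construction is the right one, and your incidence variety $I_n$ is in fact essentially the parameter space the paper uses. The difference is that you route the argument through the Severi variety $V_n$ and the moduli space $\overline{M}_g(S,D)$, and this is what creates the ``main obstacle'' you flag at the end. That obstacle is self-inflicted: the paper simply takes the projective bundle $P$ over a Zariski open set of $S^{[n]}$ parameterizing curves in $|\Gamma+R|$ with $n$ \emph{specified} nodes, and uses $P$ itself as the base of the family. The family of stable maps is obtained by normalizing the universal curve at the $n$ specified nodes. The two points $(\Gamma\cup R,\,N_1)$ and $(\Gamma\cup R,\,N_2)$ are two distinct points of $P$ lying over the same curve but with different selected node sets, and their fibers in the normalized family are exactly $g'_1$ and $g'_2$. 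Since $P$ is a projective bundle over an open set of the simply connected smooth variety $S^{[n]}$, it is irreducible and admits a simply connected smooth projective compactification; no passage to $\overline{V_n}$ or to $\overline{M}_g(S,D)$, and hence no irreducibility statement for the Severi variety, is needed.

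Put differently: in your own argument the map $I_n\to \overline{V_n}$ collapses precisely the datum (the choice of which $n$ nodes to normalize) that distinguishes $g'_1$ from $g'_2$, and recovering that datum forces you to separate branches of $\overline{V_n}$ and to control its global irreducibility. Keeping the node-selection as part of the parameter, as the paper does with $P$ (or equivalently your $I_n$ before projecting), makes the two fibers visibly points of one irreducible, simply connected base and finishes the proof in a couple of lines. A minor additional point: the paper works over the Hilbert scheme $S^{[n]}$ rather than the symmetric product $S^{(n)}$, which is already smooth and makes the projective-bundle and compactification statements cleaner.
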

\begin{proof} We prove this result  by noting that the universal  family of curves in $S$ with $n$ specified  nodes  in the class $\Gamma\cup R$, for $R$ ample enough, is parameterized by a  projective  bundle $P$ over a  Zariski open set in  $S^{[n]}$. As $S$ is simply connected,  $S^{[n]}$  is simply connected, and $P$ admits a simply connected projective compactification. The desired family  is obtained by normalizing the curves  of  the universal family over $P$ at the specified set of $n$ nodes, which specializes to $N_1$ as set of specified  nodes for $\Gamma_1\cup R$ and to  $N_2$ as set of specified nodes for $\Gamma_2\cup R$.
\end{proof}
We deduce Lemma \ref{lethirdstep} from Sublemma \ref{sublemmhel} by choosing for $S$ a smooth surface complete intersection of ample hypersurfaces in $X$, containing the curves $\mathcal{C}_{1,x}$ and $E$. We may need to blow-up $X$ to guarantee that the union $\mathcal{C}_{1,x}\cup E$ is nodal in $X$ so that such a smooth surface $S$ exists. The simple connectedness of $S$ follows from that of $X$ which is rationally connected. Assume first that, in Lemma \ref{lesecondstep}, the number of attachments of $A$  to $\mathcal{C}_{1,0_J}$ and $B$ to $\mathcal{C}_{1,x}$  are the same. The curves  $\mathcal{C}_{1,0_J}$ and $\mathcal{C}_{1,x}$  have the same arithmetic genus since they are fibers of the family (\ref{eqstablemap1}), and the curves $\mathcal{C}_{2,0}=\mathcal{C}_{1,0_J}\cup A$ and $\mathcal{C}_{2,\infty}=\mathcal{C}_{1,x}\cup B$ have the same arithmetic genus since they are fibers of the family (\ref{eqstablemap2}). It follows  that, in that case,  the two partial  normalization maps
$f_A:A\rightarrow E$ and $f_B:B\rightarrow E$ normalize the same number of points of $E$, so that Sublemma \ref{sublemmhel} applies directly to $A=\Gamma_1,\,B=\Gamma_2,\,E=\Gamma$. In general, we reduce to the previous case by gluing an extra component to both $\mathcal{C}_{2,0}$ and $\mathcal{C}_{2,\infty}$ and by  modifying in an adequate way the family $\mathcal{C}_2$ of (\ref{eqstablemap2}) using Lemma \ref{ledattachement} below.

\vspace{0.5cm}

{\bf Step 3.} {\it Attaching curves to glue the families.}  Although the fact that the stable maps  $A$ and $B$ had  the same image in $X$  played an important role in the previous proof, we  consider them as distinct curves inside $X$, with empty  intersection, which can be done by replacing $X$ by $X\times \mathbb{P}^1$. All the curves considered below are  curves in $X$ and the unions we write  are unions of  curves in $X$ with transverse intersections.

We now have three families
$\mathcal{C}_1,\,\mathcal{C}_2,\, \mathcal{C}_3$  respectively  over $W_1$, $W_2$ and $W_3$,  with

-  respective special fibers $\mathcal{C}_{1,0_J},\,\mathcal{C}_{1,x}$  over $0_J,\,x\in W_1$,

-  respective special fibers $\mathcal{C}_{1,0_J}\cup A,\,\mathcal{C}_{1,x}\cup B$  over $0,\,\infty\in W_2$, and

-  respective special fibers $\mathcal{C}_{1,x}\cup B \cup R,\, \mathcal{C}_{1,x}\cup A\cup R$ over   $w_2,\,w_3\in W_3$.

 In other words, if we make correspond $0$ with  $0_J$, $\infty$ with  $w_2$, and $w_3$ with $x$,  the corresponding special fibers of the  three families    coincide after attaching  $A\cup R$ to the fibers of $f_1$,   and  $R$ to the fibers of $f_2$.

We now explain how we can  modify the  families above in order to make them  actually having the same corresponding  special fibers.  A key point is the following result which allows to attach curves in families, without changing the Abel-Jacobi map.
\begin{lemm}   \label{ledattachement}  Let $Y$ be a smooth projective  variety  and let
$$p:\mathcal{C}\rightarrow B,\,\, \phi: \mathcal{C}\rightarrow Y$$ be a family of stable maps with value in $Y$, that we assume to be injective for simplicity,  parameterized by a connected basis $B$,  let $T\subset Y$ be a curve intersecting transversally two fibers $\mathcal{C}_{b_1},\,\mathcal{C}_{b_2}$ along   sets  $N_1:=\mathcal{C}_{b_1} \cap T$, resp. $N_2:=\mathcal{C}_{b_2}\cap T$ of  smooth points. Then for an adequate choice of curve $D\subset Y$,  there exists
a family
 $\phi': \mathcal{C}'\rightarrow Y$, $p':\mathcal{C}'\rightarrow B^0$, where $B^0$ is a Zariski open set of $B$ containing $b_1$ and $b_2$, with the following properties.
 \begin{enumerate}
 \item The fibers of  the family $\mathcal{C}'$ are obtained by attaching  curves to the fibers of the family $\mathcal{C}$.
 \item The attached curves have constant rational equivalence class. In particular, the  Abel-Jacobi maps of the families  $\mathcal{C}'$ and  $\mathcal{C}$ are equal.
 \item Over $b_1$, resp.  $b_2$, the fiber $\mathcal{C}'_{b_1}$, resp.  $\mathcal{C}'_{b_2}$, is the transverse   union of curves in $Y$
 $\mathcal{C}_{b_1}\cup T\cup D$, resp.  $\mathcal{C}_{b_2}\cup  T\cup D$.
 \end{enumerate}
 \end{lemm}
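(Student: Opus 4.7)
The strategy is to choose $D$ as a suitably positive configuration of very free rational curves attached to $T$ (Kollár--Miyaoka--Mori style), so that the stable map $T\cup D \hookrightarrow Y$ has enough unobstructed deformations to interpolate between the prescribed attachments $N_1$ and $N_2$ while keeping the glued cycle class constant. The family $\mathcal{C}'$ is then obtained by relative nodal gluing along a well-chosen section of the resulting parameter space.

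\textbf{Step 1: Choice of $D$.} Pick general points $q_1,\ldots,q_k$ on $T$ away from $N_1\cup N_2$, and for each $i$ a very free rational curve $D^i\subset Y$ passing through $q_i$; set $D:=D^1\cup\cdots\cup D^k$. For $k$ large and $[D^i]$ sufficiently positive, the KMM technique ensures that the normal sheaf of the stable map $T\cup D\hookrightarrow Y$ (with $T$ fixed) has vanishing $H^1$ and that the evaluation map from its deformation space to any prescribed finite product $Y^N$ of points is surjective. Since each $D^i$ is rational, the deformation space of $D$ is unirational, hence any deformation $T\cup D'$ of $T\cup D$ keeping $T$ fixed defines a $1$-cycle on $Y$ rationally equivalent to $T+D$.

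\textbf{Step 2: The parameter space over $B$.} Introduce the variety $\mathcal{P}\to B$ whose fiber over $b$ parameterizes pairs $(g,N)$, where $g\colon T\cup D'\to Y$ is a deformation of $T\cup D$ fixing $T$ and $N$ is a transverse $n$-point subset of $g(T\cup D')\cap \mathcal{C}_b$. The two prescribed data $(b_i,T\cup D,N_i)$ define points of $\mathcal{P}$, and by Step 1 combined with the flatness of $\mathcal{C}\to B$, the projection $\mathcal{P}\to B$ is smooth at these two points. A relative moving argument, based on the surjectivity of the evaluation map in Step 1, then produces a section $\sigma$ of $\mathcal{P}$ over some Zariski open set $B^0\subset B$ containing $b_1,b_2$ with $\sigma(b_i)=(T\cup D,N_i)$ for $i=1,2$.

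\textbf{Step 3: Gluing and conclusion.} For $b\in B^0$, let $G_b$ be the source of $\sigma(b)$ and $N_b\subset \mathcal{C}_b$ the corresponding set of $n$ gluing points. Define
$$\mathcal{C}'_b := \mathcal{C}_b\cup_{N_b} G_b,$$
equipped with the evident map to $Y$; these fit into a family $\phi'\colon\mathcal{C}'\to Y$, $p'\colon\mathcal{C}'\to B^0$. Property (1) is immediate; property (3) holds by $\sigma(b_i)=(T\cup D,N_i)$; property (2) follows because, by Step 1, all the glued curves $G_b$ are rationally equivalent to $T+D$ in $Y$, so their contribution to the Abel-Jacobi map is independent of $b$.

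\textbf{Main obstacle.} The hard part is Step 2: producing the section $\sigma$ globally over a Zariski open neighborhood of $\{b_1,b_2\}$ and matching the prescribed values there, rather than just analytically or after base change. Its verification hinges on the surjectivity of the evaluation map built into the choice of $D$, together with the smoothness of the deformation space of $T\cup D$ afforded by the very-freeness of the rational components $D^i$; once this surjectivity is in hand, the existence of $\sigma$ reduces to a standard interpolation/smoothing of two prescribed smooth points of a family over $B$.
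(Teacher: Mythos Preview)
Your Step~2 contains a genuine gap, and it is not merely a matter of making the ``relative moving argument'' precise. The difficulty is that you keep $T$ fixed as a component of the attached curve. Since $T$ is a \emph{fixed} curve in the threefold $Y$ and $\mathcal{C}_b$ moves in a positive-dimensional family, one has $T\cap\mathcal{C}_b=\emptyset$ for general $b\in B$ (two curves in a threefold generically do not meet). Thus for general $b$ the gluing set $N_b$ must lie entirely on the $D'_b$-part, whereas at $b_1,b_2$ you require $N_i\subset T$. In a flat family of stable maps over a smooth base with constant number of nodes, the dual graph is locally constant; your construction would force the edge joining the $\mathcal{C}_b$-vertex to switch from the $T$-vertex (at $b_i$) to the $D'$-vertex (at general $b$), which is impossible without an intervening degeneration. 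Concretely, near the prescribed point $(b_1,T\cup D,N_1)$ in $\mathcal{P}$, deformations keeping $N$ close to $N_1\subset T$ exist only over the proper sublocus $\{b:T\cap\mathcal{C}_b\neq\emptyset\}\subset B$, so $\mathcal{P}\to B$ is \emph{not} smooth there, contrary to your claim.

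The paper's proof circumvents this by a quite different and more elementary device: it does not fix $T$, but instead chooses a smooth surface $S\subset Y$ containing $T$ and works with a very ample linear system $|T+D|$ \emph{on $S$}. The gluing locus is then simply $N''_b:=\mathcal{C}_b\cap S$, which is automatically nonempty, of constant cardinality, and varies algebraically with $b$. The curves attached to $\mathcal{C}_b$ are members $D'_b\in|T+D|$ through $N''_b$, parameterized by a projective bundle over (an open set of) $B$; a rational section of this bundle, chosen to pass through the reducible member $T\cup D$ at both $b_1$ and $b_2$, gives the family. Rational equivalence of the glued curves is immediate since they all lie in one linear system on $S$. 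If you want to repair your approach, the essential missing idea is precisely this: allow $T$ to dissolve into a larger moving family (either by smoothing $T\cup D$ or, as in the paper, by passing to a linear system on an auxiliary surface), so that the attachment locus can follow $\mathcal{C}_b$ for general $b$.
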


A key point for us is the fact that the same curve $D$ can be used for different families, as it is here just to  give more positivity.
\begin{proof}[Proof of Lemma \ref{ledattachement}] For simplicity, we prove the statement when $Y$ is a threefold. Let $S\subset Y$ be a smooth projective surface containing $T$ and in general position otherwise. We write  $$\mathcal{C}_{b_1}\cap S=N_1\cup N'_1,\,\,\mathcal{C}_{b_2}\cap S=N_2\cup N'_2,$$
and we choose a very ample linear system of curves on $S$, which contains
a curve of the form $T\cup D$, intersecting $\mathcal{C}_{b_1}$ along $N_1\cup N'_1$, and $\mathcal{C}_{b_2}$ along $N_2\cup N'_2$.
The general curve $\mathcal{C}_b$, $b\in B$, intersects $S$ along a set $N''_b$ specializing to   $N_1\cup N'_1$ and $N_2\cup N'_2$, and, as the linear system $|T+D|$ is chosen ample enough on $S$, there is a projective bundle over $B$ parameterizing curves $D'_b$ in $|T+D|$ passing through $N''_b$.
We choose a rational section $b\mapsto D'_b$ of this projective bundle which is well-defined on a Zariski open set of $B$ containing $b_1$ and $b_2$ and passes through the point parameterizing  $T\cup D$ at both points $b_1$, $b_2$, and attach
$D'_b$ to $\mathcal{C}_b$ along $N''_b$. As all the glued components belong to the same linear system on $S$, they all have the same rational equivalence class in $Y$.
\end{proof}

We apply  Lemma \ref{ledattachement}  to the three families already constructed. The curve $R$ being as in \ref{curveR} of Lemma \ref{lethirdstep}, this lemma provides us with
a family of stable maps
$$p'_1: \mathcal{C}'_1\rightarrow W_1,\,f '_1: \mathcal{C}'_1\rightarrow X$$
parameterized by $W_1$  (or a Zariski dense open set of it containing its two special fibers), with special fibers
$\mathcal{C}_{1,0_J}\cup A\cup R\cup  D_1$ over $0_J$ and  $\mathcal{C}_{1,x}\cup A\cup R\cup D_1$ over $x$.

Similarly,  Lemma \ref{ledattachement} provides us with
a family of stable maps
$$p'_2: \mathcal{C}'_2\rightarrow W_2,\,f '_2: \mathcal{C}'_1\rightarrow X$$
parameterized by $W_2$  (or a Zariski dense open set of it containing its two special fibers), with special fibers
$\mathcal{C}_{2,0}\cup R\cup D_2=\mathcal{C}_{1,0_J}\cup A\cup R\cup D_2$ over $0\in \mathbb{P}^1$ and  $\mathcal{C}_{2,\infty}\cup R\cup D_2=\mathcal{C}_{1,x}\cup B \cup R \cup D_2$ over $x$.

Finally,  Lemma \ref{ledattachement} (with an empty curve) provides us with
a family of stable maps
$$p'_3: \mathcal{C}'_3\rightarrow W_3,\,f '_3: \mathcal{C}'_3\rightarrow X$$
parameterized by $W_3$  (or a Zariski dense open set of it containing its two special fibers), with special fibers
$\mathcal{C}_{3,w_2}\cup D_3=\mathcal{C}_{1,x}\cup B\cup R\cup D_3$ over $w_2\in  W_3$ and  $\mathcal{C}_{3,w_3} \cup D_3=\mathcal{C}_{1,x}\cup A \cup R \cup D_3$ over $w_3\in W_3$.

There is a lot of freedom in choosing the  curves $D_i$, as shows the proof of Lemma \ref{ledattachement}.  Looking at this proof and using the numerical conditions (\ref{itemcondpoint1}) and (\ref{itemcondpoint2}), we check that we can  choose  $D_1=D_2=D_3$. The three families $\mathcal{C}'_1$,   $\mathcal{C}'_2$,  $\mathcal{C}'_3$ then have identical corresponding special fibers, hence they glue to give a family of stable maps
parameterized by the union
$$B=W_{1}\cup_{0_J=0,\,x=w_3}(W_2\cup_{\infty=w_2} W_3).$$
Theorem \ref{theoauxi} is then almost proved, except for property  \ref{4}  (unobstructedness), which we will achieve now by a further modification of the family.

\vspace{0.5cm}

{\bf Step 4.} {\it   Realizing unobstructedness. }  Unobstructedness is obtained by applying the following
\begin{prop}\label{propunobstructed} Let $X$ be a rationally connected smooth projective variety and let
\begin{eqnarray}  \label{eqcorrespRC} \begin{matrix}
 & \mathcal{C}&\stackrel{f}{\rightarrow}& X
 \\
&p\downarrow& &
\\
&W& &
\end{matrix}
\end{eqnarray}
be a family of stable maps to $X$ parameterized by a quasiprojective variety $W$. Let $w_1,\ldots,\,w_N$ be points of $W$. Then there exists a family of stable maps
\begin{eqnarray}  \label{eqstablemodifiedprime} \begin{matrix}
 & \mathcal{C}'&\stackrel{f'}{\rightarrow}& X
 \\
&p'\downarrow& &
\\
&W^0& &
\end{matrix}
\end{eqnarray}
parameterized by   a Zariski open set $W^0$  of $W$ containing the points $w_i$, with the following properties.

\begin{enumerate}
\item   \label{item1pouruno} For any $t\in W^0$, the stable map $f'_t:\mathcal{C}'_t\rightarrow X$ is unobstructed.
\item \label{item2pouruno}   For any $t\in W^0$, the stable map $f'_t:\mathcal{C}'_t\rightarrow X$ is of the form
$$  \mathcal{C}'_t=\mathcal{C}_t\cup   \mathcal{E}_t,\,\,f'_{t\mid \mathcal{C}_t}=f_t, $$
and  the $1$-cycles $f'_{t*}\mathcal{E}_t$ are all rationally equivalent in $X$.
\end{enumerate}
\end{prop}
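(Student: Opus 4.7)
The plan is to combine the Koll\'ar--Miyaoka--Mori (KMM) technique for killing obstructions of stable maps by attaching combs of very free rational curves, with the rational-equivalence-preserving attachment device already used in the proof of Lemma~\ref{ledattachement}. Recall that a stable map $f:C\to X$ is unobstructed once the appropriate $H^1$ of $f^*T_X$ twisted by the ideal of the nodes vanishes, and by KMM attaching enough very free rational curves at sufficiently general points of $C$ forces this vanishing.

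Concretely, I would proceed as follows. Choose a smooth projective surface $S\subset X$ obtained as a complete intersection of sufficiently ample hypersurfaces, meeting the generic fiber of $p$ transversely in finitely many points. Shrink $W$ to a Zariski open subset $W^0$ containing the $w_i$ over which $\mathcal{C}_t\cap S$ has constant cardinality $n$ and varies algebraically with $t$; denote this intersection by $N_t\subset\mathcal{C}_t$. Fix a very ample linear system $|L|$ on $S$; the curves of $|L|$ passing through $N_t$ form a projective bundle $P\to W^0$. A rational section of $P$, defined on a Zariski open subset of $W^0$ still containing the $w_i$, provides curves $\mathcal{E}_t\in|L|$ attached to $\mathcal{C}_t$ along $N_t$, producing a family $(\mathcal{C}',f')$ with $\mathcal{C}'_t=\mathcal{C}_t\cup_{N_t}\mathcal{E}_t$. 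Since all $\mathcal{E}_t$ belong to the single linear system $|L|$ on $S$, their pushforwards in $X$ have constant rational equivalence class, establishing property~\ref{item2pouruno}.

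To obtain unobstructedness, property~\ref{item1pouruno}, one must choose $|L|$ positive enough that a general $\mathcal{E}_t\in|L|$ contains, as components or via further attachment, very free rational curves in $X$ whose combined positivity kills the obstruction spaces at each $w_i$. Since $X$ is rationally connected and $S$ was chosen generically, one can arrange $|L|$ to contain reducible members with very free rational components meeting $\mathcal{C}_t$ at the required base points; if a single round of attachment does not suffice, one iterates with additional surfaces $S_1,S_2,\ldots$ and linear systems $|L_i|$, each step preserving rational equivalence. Unobstructedness being an open condition on the base of a family of stable maps, it then suffices to check it at the finite set $\{w_1,\ldots,w_N\}$, after which one shrinks $W^0$ further if necessary.

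The main obstacle will be reconciling the two competing constraints: rational equivalence forces the attached curves to move in a fixed linear system on a fixed surface, whereas the KMM vanishing requires attachments very free in the ambient threefold $X$. Rational connectedness of $X$ is essential here, as it allows $S$ to be chosen to contain an abundance of very free rational curves in $X$ and the sections of $P\to W^0$ to be flexible enough to realize the required attachment configurations at each $w_i$.
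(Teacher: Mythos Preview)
Your proposal correctly isolates the central tension---rational equivalence pushes the attached curves into a fixed linear system on a fixed surface~$S$, while unobstructedness via KMM needs very free rational curves in~$X$---but the resolution you sketch does not close the gap. Two concrete problems arise. First, a general member of a very ample linear system $|L|$ on a complete-intersection surface $S$ has large genus, so it is not a very free rational curve; forcing the rational section of $P\to W^0$ to land in the locus of reducible members with very free rational components at each $w_i$ is a highly non-generic condition and you give no mechanism to achieve it while keeping the section defined on an open set containing all the $w_i$. Second, because your attached curve $\mathcal{E}_t$ lies entirely in the fixed surface $S$, the tangent direction of $\mathcal{E}_t$ at every attachment point is constrained to $T_pS$; the Hartshorne--Hirschowitz positivity argument, which is what actually forces $H^1$ of the normal bundle restricted to $\mathcal{C}_t$ to vanish, requires \emph{generic} attaching directions, and iterating with further surfaces $S_1,S_2,\dots$ does not escape this constraint.

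The paper's proof separates the two requirements into two distinct rounds of attachment. In the first round one attaches, at each of the points of $f_t(\mathcal{C}_t)\cap Y$ for a very ample hypersurface $Y$, an individual complete-intersection curve $C_i$ through that single point; the $C_i$ are not confined to a common surface, so their tangent directions are unconstrained and Hartshorne--Hirschowitz applies, yielding $H^1(\mathcal{C}_t,N_{\mathcal{C}''_t/X}|_{\mathcal{C}_t})=0$. Rational equivalence is automatic here since complete intersections of fixed multidegree have constant class. This is still not enough, because $H^1$ may survive on the new components $C_i$. The second round then attaches very free rational legs $R_{ij}$ to each $C_i$ in the KMM fashion; the key point you are missing is that this second attachment is performed over the parameter space of complete-intersection curves of the given multidegree, which is \emph{rational}, and hence the cycle class of $\bigcup_j R_{ij}$ can be taken constant as $C_i$ varies. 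It is this rationality of the intermediate parameter space, rather than confinement to a linear system on a surface, that reconciles rational equivalence with the freedom needed for unobstructedness.
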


\begin{proof}  This proposition is essentially proved in \cite{voisinconiveau} although maybe not stated with this degree of generality, so we just sketch the proof.
First of all,  we can assume  without loss of generality  (we can indeed replace $X$ by $X\times \mathbb{P}^r$,  which has  isomorphic  group ${\rm CH}_{1,{\rm hom}}$ and intermediate Jacobian $J_3$)  that the maps $f_t$ are generically  injective and attach to the generic curve  $\mathcal{C}_t$ some complete intersections $C_1,\ldots, C_M$  of ample hypersurfaces in $X$ at the  $M$ points $x_1,\ldots, x_m$  in  the intersection $f_{t}(\mathcal{C}_t)\cap Y$, where $Y$ is a sufficiently ample hypersurface in $X$. These complete intersection curves have constant rational equivalence class.  It is proved by Hartshorne and Hirschowitz in \cite{hahi} that this operation, which can be done over a Zariski open set of $W$ containing the $N$ given points,  provides for $M$ large enough and  for generic choices of attaching directions,  local complete intersection  curves $f''_t:\mathcal{C}''_t\hookrightarrow X$,  $\mathcal{C}''_t=\mathcal{C}_{t}\cup C_1\cup\ldots\cup C_M$, with the property that the normal bundle $N_{\mathcal{C}''_{t}/X}$ restricted to $\mathcal{C}_t$ is arbitrarily positive, hence satisfies  $H^1(\mathcal{C}_{t},N_{\mathcal{C}''_{t}/X\mid \mathcal{C}_t})=0$. Unfortunately, this is not enough to guarantee that the curves $\mathcal{C}''_{t}$ are unobstructed, since we need for this the stronger property that the normal   bundle $N_{\mathcal{C}''_{t}/X}$  has  $H^1(\mathcal{C}''_{t},N_{\mathcal{C}''_{t}/X})=0$. This stronger condition is achieved by gluing as in \cite{komimo} very free rational curves $R_{ij}$  to each of the  complete intersection components $C_i$.
We can perform first   this  second construction over the function field of the variety parameterizing curves   complete intersections of very ample hypersurfaces of given degree. This  variety is rational so the union
$\cup_jR_{ij}$ of the  rational legs we add to $C_i$ is a $1$-cycle of  constant rational equivalent class in $X$. By a slight modification of   \cite{komimo}, the curve
$$\mathcal{C}'_t=\mathcal{C}''_t\cup_{ij} R_{ij}=\mathcal{C}_t\cup C_1\cup\ldots\cup C_m\cup_{ij} R_{ij}$$
is unobstructed if we add enough free rational curves in general position, so \ref{item1pouruno} is satisfied. By construction, this curve also satisfies   \ref{item2pouruno}, proving the proposition.
\end{proof}
By applying Proposition \ref{propunobstructed} to the family constructed in Step 3, we finally get a family satisfying all the properties stated in Theorem \ref{theoauxi}, which is thus proved.
\section{The case of $1$-cycles on higher dimensional rationally connected manifolds \label{sectheowalker}}
We now turn to the case of $1$-cycles on higher dimensional rationally connected manifolds. There are two important differences with the case of dimension $3$, as  Theorem \ref{blochsri} is not true for two reasons. First of all, as mentioned in the introduction, and can be   seen when we blow-up a surface inside a fourfold,  the ${\rm CH}_0$ group  of a surface can be present  in ${\rm CH}_1(X)$, and by \cite{mumford}, we do not expect a Mumford representability result  for ${\rm CH}_1(X)$, which can be infinite dimensional even when $X$ is rational. In particular,  the Bloch-Srinivas  injectivity theorem (Theorem \ref{blochsri})  for the Abel-Jacobi map on  codimension $2$ cycles  algebraically equivalent to $0$ is not true for $1$-cycles on rationally connected $n$-folds. This was our starting point in the proof of Theorem \ref{theoauxi}.  A second phenomenon is the fact that the Abel-Jacobi map
$$\Phi_X:{\rm CH}^{n-1}(X)\rightarrow J^{2n-3}(X)=J_3(X)$$
for $1$-cycles on rationally connected manifolds
 is a priori  not universal (in the sense of \cite{murre})  anymore, as it  admits the Walker factorization  \cite{walker}, which one can describe as follows: Recall that, inside $H^{2n-3}(X,\mathbb{Z})_{\rm tf}$, we have the subgroup
 $N^{n-2}H^{2n-3}(X,\mathbb{Z})_{\rm tf}$ of homology classes supported on a possibly singular surface $S\subset X$.
 For a rationally connected $n$-fold, one has
 $$H^{2n-3}(X,\mathbb{Q})=N^{n-2}H^{2n-3}(X,\mathbb{Q})$$ as shows an argument involving the decomposition of the diagonal with $\mathbb{Q}$-coefficients \cite{blochsrinivas}, and it follows that
 $$N^{n-2}H^{2n-3}(X,\mathbb{Z})_{\rm tf}\subset H^{2n-3}(X,\mathbb{Z})_{\rm tf}$$
 is a finite index subgroup. This determines a modified intermediate Jacobian
 $$J^{2n-3}(X)_{\rm Walker}:=H^{2n-3}(X,\mathbb{C})/(F^{n-1}H^{2n-3}(X)\oplus N^{n-2}H^{2n-3}(X,\mathbb{Z})_{\rm tf}),$$
 which is obviously isogenous to $J^{2n-3}(X)$,
 and it is proved in \cite{walker} that  the Abel-Jacobi map $\Phi_X^{n-1}$ factors through
 $$\Phi_{X,{\rm Walker}}^{n-1}: {\rm CH}^{n-1}(X)_{\rm alg}\rightarrow J^{2n-3}(X)_{\rm Walker}.$$
 Note that a  factorization of this type  holds for the Abel-Jacobi map for cycles on any codimension on any smooth projective variety (see \cite{walker}).

  As mentioned above, Theorem \ref{blochsri} is not true anymore for $1$-cycles  on rationally connected manifolds for $n\geq 4$. However,  one has the following result for torsion $1$-cycles, which we  will use for the proof of Theorem \ref{theoWalker}.
 \begin{theo} \label{theosuzuki} (Suzuki \cite{suzuki}.) The Walker Abel-Jacobi map $\Phi_{X,{\rm Walker}}^{n-1}$ is injective on the torsion subgroup of $1$-cycles algebraically equivalent to zero modulo rational equivalence of a rationally connected $n$-fold.
 \end{theo}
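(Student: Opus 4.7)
The strategy is to reduce the assertion to Roitman's theorem on torsion $0$-cycles of a smooth projective surface, by means of a Bloch--Srinivas decomposition of the diagonal refined using the coniveau structure of $H^{2n-3}(X)$.

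Fix an $m$-torsion class $z\in{\rm CH}^{n-1}(X)_{\rm alg}$ with $\Phi_{X,{\rm Walker}}^{n-1}(z)=0$; the goal is to show $z=0$. Since $X$ is rationally connected, ${\rm CH}_0(X)_{\mathbb Q}=\mathbb Q$, so Bloch--Srinivas gives $N>0$ and $Z_1,Z_2\in{\rm CH}^n(X\times X)$ with $N\Delta_X=Z_1+Z_2$, $Z_1$ supported on $D\times X$ for some divisor $D\subsetneq X$, and $Z_2=N\cdot X\times\{x_0\}$. Acting on the homologically trivial $1$-cycle $z$, the component $Z_{2,*}(z)$ vanishes for dimension reasons, whence $Nz$ lies in the image of ${\rm CH}^{n-2}(\widetilde D)\to{\rm CH}^{n-1}(X)$ for a resolution $\widetilde D\to D$.

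The main step is to push the reduction down to a surface. The coniveau statement of Theorem \ref{theogeotopgeneral}, asserting that $N^{n-2}H^{2n-3}(X,\mathbb Z)_{\rm tf}$ has finite index in $H^{2n-3}(X,\mathbb Z)_{\rm tf}$, means that $H^{2n-3}(X,\mathbb Q)$ is supported on a surface. A further Bloch--Srinivas chase, using the cycle $Z\in{\rm CH}^{n-1}(C\times X)$ of Theorem \ref{theogeotopgeneral} together with a birational modification of $X$ if necessary, should produce an integer $N'$ and cycles $\Gamma_1,\Gamma_2\in{\rm CH}^n(X\times X)$ with $N'\Delta_X\equiv\Gamma_1+\Gamma_2$ modulo torsion coprime to $N'$, where $\Gamma_1$ is supported on $\widetilde S\times X$ for a smooth projective surface $\widetilde S$ mapping to $X$ via a morphism $j$, and $\Gamma_{2,*}$ annihilates ${\rm CH}^{n-1}(X)_{\rm hom}$. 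Acting on $z$, this yields $N'z=j_*(w)$ for a torsion $0$-cycle $w\in{\rm CH}_0(\widetilde S)_{\rm hom}$.

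The conclusion follows from the commutative square
$$\xymatrix{{\rm CH}_0(\widetilde S)_{\rm hom}\ar[r]^-{{\rm alb}_{\widetilde S}}\ar[d]_{j_*} & {\rm Alb}(\widetilde S)\ar[d]^{j_*}\\ {\rm CH}^{n-1}(X)_{\rm alg}\ar[r]^-{\Phi_{X,{\rm Walker}}^{n-1}} & J^{2n-3}(X)_{\rm Walker}}$$
which is encoded in the construction of the Walker intermediate Jacobian through the coniveau filtration. From $\Phi_{X,{\rm Walker}}^{n-1}(N'z)=0$ one deduces $j_*({\rm alb}_{\widetilde S}(w))=0$ in $J^{2n-3}(X)_{\rm Walker}$. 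After replacing $w$ by its projection onto the abelian subvariety of ${\rm Alb}(\widetilde S)$ on which $j_*$ is injective (obtained by applying Lemma \ref{levenantdeconiveau} to $A=j_*({\rm Alb}(\widetilde S))$ and using a splitting of the corresponding isogeny), one gets ${\rm alb}_{\widetilde S}(w)=0$. Roitman's theorem on torsion $0$-cycles of $\widetilde S$ then forces $w=0$, hence $N'z=0$. Choosing $N'$ coprime to $m$, by working $\ell$-primarily throughout, yields $z=0$. The principal obstacle is the middle step: upgrading the coniveau statement (which is a statement about homological equivalence) into a decomposition of the diagonal valid modulo torsion in the Chow group. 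This is where an integral Hodge-type input for surfaces inside $X$, or a torsion-level substitute, must be exploited, and is the technical heart of the argument.
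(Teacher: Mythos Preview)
The paper does not give a proof of this statement: Theorem~\ref{theosuzuki} is quoted from Suzuki's paper \cite{suzuki} and used as an input, not reproved. So there is no ``paper's own proof'' to compare your proposal against; what follows is an assessment of your sketch on its own terms.

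Your overall strategy---reduce to torsion $0$-cycles on a surface via a Bloch--Srinivas-type decomposition, then invoke Roitman---is indeed the standard route to statements of this kind, and is essentially how Suzuki proceeds. However, your write-up is not a proof but an outline with a self-declared hole: you say yourself that ``the principal obstacle is the middle step,'' namely turning the cohomological coniveau information into a diagonal decomposition that is valid integrally (or at least modulo torsion coprime to the order of $z$). That is exactly the nontrivial content of the result, and you have not supplied it; phrases like ``should produce'' and ``a torsion-level substitute must be exploited'' are placeholders, not arguments.

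There is also a local error in the endgame. You write ``after replacing $w$ by its projection onto the abelian subvariety of ${\rm Alb}(\widetilde S)$ on which $j_*$ is injective (obtained by applying Lemma~\ref{levenantdeconiveau} \ldots), one gets ${\rm alb}_{\widetilde S}(w)=0$.'' Lemma~\ref{levenantdeconiveau} does not produce a projector on ${\rm Alb}(\widetilde S)$; it produces a \emph{different} variety $M'$ with ${\rm Alb}(M')\cong A$ together with a correspondence to $\widetilde S$. Even if you had a splitting ${\rm Alb}(\widetilde S)\cong A\oplus A'$ with $j_*$ injective on $A$, projecting ${\rm alb}_{\widetilde S}(w)$ onto $A$ does not correspond to replacing $w$ by another $0$-cycle with the same image $N'z$ in ${\rm CH}^{n-1}(X)$, so the conclusion ${\rm alb}_{\widetilde S}(w)=0$ and hence $w=0$ is not justified. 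The correct maneuver is to arrange from the start, via the decomposition, that the relevant correspondence factors through a surface whose Albanese injects into $J^{2n-3}(X)_{\rm Walker}$ (this is where the definition of the Walker lattice $N^{n-2}H^{2n-3}(X,\mathbb Z)_{\rm tf}$ enters), rather than to fix things after the fact by ``projecting $w$.''
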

 We now prove Theorem \ref{theoWalker}, which is the optimal higher dimensional generalization of Theorem \ref{theomain}.
\begin{proof}[Proof of Theorem \ref{theoWalker}]  As explained in  Remark \ref{remapbplace}, there exists an abelian variety $J$ with a  codimension $n-1$ cycle
$Z\in {\rm CH}^{n-1}(J\times X)$ such that the induced Walker Abel-Jacobi map
$$\Phi_{Z,{\rm Walker}}: {\rm Alb}(J)=J\rightarrow  J^{2n-3}(X)_{\rm Walker}$$  is an isogeny.  The kernel of $\Phi_{Z,{\rm Walker}}$ thus consists of torsion points of $J$.
We now use the following
\begin{lemm} For any abelian variety $A$ and torsion point $x\in A$, the zero-cycle
$\{x\}-\{0_A\}\in{\rm CH}_0(A)$ is of  torsion.
\end{lemm}
It follows that for any $x\in {\rm Ker}\,\Phi_{Z,{\rm Walker}}$, the cycle
$Z_x-Z_{0_J}=Z_*(\{x\}-\{0_J\})$ is a torsion element of ${\rm CH}_1(X)_{\rm alg}$. Moreover, as $x\in {\rm Ker}\,\Phi_{Z,{\rm Walker}}$, this cycle is annihilated by $\Phi_{X,{\rm Walker}}^{n-1}$. It then follows from Theorem \ref{theosuzuki} that
\begin{eqnarray} \label{eqvanZsuzuki} Z_x-Z_{0_J}=0\,\,{\rm in}\,\,{\rm CH}_1(X).
\end{eqnarray}
  We can thus apply Theorem \ref{theoauxi}  which only uses the vanishing (\ref{eqvanZsuzuki}) and rational connectedness of $X$. We get in turn   the following
  version of  Theorem \ref{theonewform}  which we obtain  as  in Section \ref{secprooftheonewform}  as a    consequence of Theorem  \ref{theoauxi}.
  \begin{prop}\label{propversionduthm} There exist a smooth projective variety $M_0$,  a codimension $n-1$ cycle
  $$Z_{M_0}\in{\rm CH}^{n-1}(M\times X)$$
  and a rational map $\phi_0: J\dashrightarrow M_0$, with the following properties
  \begin{enumerate}
  \item \label{item1suzuji} $ \Phi_{Z_{M_0},{\rm Walker}}\circ \phi_*= \Phi_{Z,{\rm Walker}}: {\rm Alb}(J)=J\rightarrow  J^{2n-3}(X)_{\rm Walker}$.
  \item \label{item2suzuki}  $x$ belongs to the kernel of the  morphism  $\phi_{0*}:{\rm Alb}(J)\rightarrow {\rm Alb}(M_0)$ induced by $\phi_0$.
  \end{enumerate}
  \end{prop}
   Note that, by item \ref{item1suzuji}, the kernel of the  morphism  $\phi_{0*}:{\rm Alb}(J)\rightarrow {\rm Alb}(M_0)$ is contained in   ${\rm Ker}\,\Phi_{Z,{\rm Walker}}$.

   We now iterate this process. For any $y\not=x \in {\rm Ker}\, \Phi_{Z,{\rm Walker}}^{n-1}$, we consider the point $y'=\phi_*y\in M_0$. By  item \ref{item1suzuji}, we have
   $\Phi_{Z_{M_0},{\rm Walker}}(y'-0')=0$, that is $\Phi_{X,{\rm Walker}}^{n-1}(Z_{M_0,y'}-Z_{M_0,0'})=0$ in   $ J^{2n-3}(X)_{\rm Walker}$, where $0':=\phi_0(0_J)$. As the zero-cycle $y'-0'$ is of torsion in $M_0$, the cycle
  $ Z_{M_0,y'}-Z_{M_0,0'}$ is of torsion in ${\rm CH}_1(X)_{\rm alg}$, hence we conclude as before by Theorem \ref{theosuzuki}  that $ Z_{M_0,y'}-Z_{M_0,0'}=0$ in  ${\rm CH}_1(X)_{\rm alg}$. We then construct as above a smooth projective variety $M_1$, a rational map $\phi':M_0\dashrightarrow M_1$ and a codimension $n-1$ cycle
  $$Z_{M_1}\in {\rm CH}^{n-1}(M_1\times X)$$ such that

   \begin{enumerate}
  \item \label{item1suzujiprime} $ \Phi_{Z_{M_1},{\rm Walker}}\circ \phi'_*= \Phi_{Z_{M_0},{\rm Walker}}: {\rm Alb}(M_0)\rightarrow  J^{2n-3}(X)_{\rm Walker}$.
  \item \label{item2suzukiprile}  ${\rm alb}(y'-0')$ belongs to the kernel of the  morphism  $\phi'_*:{\rm Alb}(M_0)\rightarrow {\rm Alb}(M_1)$ induced by $\phi'$.
  \end{enumerate}
  The composition $\phi_1=\phi'\circ \phi_0: J\dashrightarrow M_1$  has now the property that
   \begin{enumerate}
  \item \label{item1suzujiter} $ \Phi_{Z_{M_1},{\rm Walker}}\circ \phi_{1}*= \Phi_{Z,{\rm Walker}}: {\rm Alb}(J)=J\rightarrow  J^{2n-3}(X)_{\rm Walker}$.
  \item \label{item2suzukiter}  $ y$ and $x$  belong  to the kernel of the  morphism  $\phi_{1}*:{\rm Alb}(J)=J\rightarrow {\rm Alb}(M_1)$ induced by $\psi$.
  \end{enumerate}
  As the kernel  ${\rm Ker}\, \Phi_{Z,{\rm Walker}}$ is finite, we finally get by iterating this process a smooth projective variety $N$,  a cycle
  $Z_{N}\in {\rm CH}^{n-1}(N\times X)$, and a rational map $\psi: J\dashrightarrow N$ satisfying the properties

   \begin{enumerate}
  \item \label{item1suzujifour} $ \Phi_{Z_{N},{\rm Walker}}\circ \psi_{*}= \Phi_{Z,{\rm Walker}}: {\rm Alb}(J)=J\rightarrow  J^{2n-3}(X)_{\rm Walker}$.
  \item \label{item2suzukifour} $ {\rm Ker}\, \Phi_{Z,{\rm Walker}}$ is contained in
  ${\rm Ker}\,( \psi_{*}:{\rm Alb}(J)\rightarrow  {\rm Alb}(N))$.
  \end{enumerate}
  The conditions \ref{item1suzujifour} and \ref{item2suzukifour} imply that    the abelian subvariety ${\rm Im}\,\psi_{*}\subset {\rm Alb}(N)$ is isomorphic to $J/{\rm Ker}\,\Phi_{Z,{\rm Walker}}$,  which is  isomorphic to $J^{2n-3}(X)_{\rm Walker}$ via $\Phi_{Z_{N},{\rm Walker}}$.

  The proof is then concluded by applying Lemma \ref{levenantdeconiveau}, which gives us a smooth projective variety $M$ together with a morphism $f: M\rightarrow N$, such that
  $f_*:{\rm Alb}(M)\rightarrow   {\rm Alb}(N)$ is an isomorphism onto  ${\rm Im}\,\psi_{*}\cong  J^{2n-3}(X)_{\rm Walker}$, and by considering the restricted cycle  $(f,Id_X)^*(Z_{N})\in {\rm CH}^{n-1}(M\times X)$.
\end{proof}

\section{Further results and questions}
\subsection{Stronger representability results\label{sectionopen}}
Recall from \cite{voisinJAG}, \cite{voisinuniv} that a codimension $2$-cycle
 $Z_{J}\in {\rm CH}^2(J^3(X)\times X)$ is  a universal codimension $2$ cycle if it satisfies
 $$\Phi_{Z_J}=Id_{J^3(X)}: {\rm Alb}(J^3(X))=J^3(X)\rightarrow J^3(X).$$
 Similarly, for any smooth projective variety $M$ of dimension $n$, a universal $0$-cycle
 parameterized by $A:={\rm Alb}(M)$ is a codimension $n$ cycle
 $Z_A\in{\rm CH}^n(A\times M)$
  such that
 $${\rm alb}_M\circ Z_{A*}: {\rm Alb}(A)= {\rm Alb}(M)\rightarrow {\rm Alb}(M)$$
 is the identity.
 We observed in \cite{voisinJAG} that  the existence of  a universal codimension $2$-cycle on $X$ is a necessary criterion for the existence of a cohomological decomposition of the diagonal of $X$, and  hence for the   stable rationality of $X$. We proved in \cite{voisinuniv} that some rationally connected threefolds do not admit a universal codimension $2$ cycle (this is known to  happen for a very general quartic double solid with seven nodes by \cite{voisinuniv} and it follows that this also holds for a very general quartic double solid). We get the following consequence
of  Theorem \ref{theomain}.
\begin{coro} \label{coroplustard}  Let $X$ be a rationally connected threefold and let $M,\,Z_M$ be as in Theorem \ref{theomain}. Then $X$ admits a universal codimension $2$ cycle if  $M$ admits a universal $0$-cycle.

In particular, for a very general quartic double solid $X$ and any smooth projective variety $M$ equipped with a codimension $2$ cycle $Z_M\in{\rm CH}^2(M\times X)$ such that $\Phi_{Z_M}:{\rm Alb}(M)\rightarrow J^3(X)$ is an isomorphism, $M$ does not admit a universal $0$-cycle.
\end{coro}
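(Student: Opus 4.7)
The plan is to construct a universal codimension $2$ cycle on $X$ directly, by composing a hypothetical universal $0$-cycle on $M$ with the given family $Z_M$ and then transporting the result along the Abel--Jacobi isomorphism $\alpha:=\Phi_{Z_M}:\mathrm{Alb}(M)\stackrel{\sim}{\to}J^3(X)$ to land it on $J^3(X)\times X$.

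Concretely, assume $M$ admits a universal $0$-cycle $Z_A\in\mathrm{CH}^{n}(\mathrm{Alb}(M)\times M)$ with $n=\dim M$. Form the correspondence composition and its transport
$$Z:=Z_M\circ Z_A\in\mathrm{CH}^2(\mathrm{Alb}(M)\times X),\qquad Z_J:=(\alpha,\mathrm{Id}_X)_*Z\in\mathrm{CH}^2(J^3(X)\times X).$$
I claim $Z_J$ is a universal codimension $2$ cycle on $X$. Indeed, for any $a\in A:=\mathrm{Alb}(M)$, the defining property of $Z_A$ gives $Z_{A*}(a)\in\mathrm{CH}_0(M)_{\mathrm{hom}}$ with $\mathrm{alb}_M(Z_{A*}(a))=a$. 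Combining this with the factorization $\Phi_X^2\circ Z_{M*}=\Phi_{Z_M}\circ\mathrm{alb}_M$ built into the definition of $\Phi_{Z_M}$ yields
$$\Phi_X^2\bigl(Z_{M*}Z_{A*}(a)\bigr)=\Phi_{Z_M}(a)=\alpha(a).$$
Since $(\alpha,\mathrm{Id}_X)_*$ only reparameterizes the base, this translates into $\Phi_{Z_J}(\alpha(a))=\alpha(a)$ for every $a$, i.e.\ $\Phi_{Z_J}=\mathrm{Id}_{J^3(X)}$, which is exactly universality of $Z_J$.

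For the ``in particular'' statement, recall from \cite{voisinuniv} that a very general quartic double solid $X$ admits no universal codimension $2$ cycle. Given any pair $(M,Z_M)$ with $\Phi_{Z_M}:\mathrm{Alb}(M)\to J^3(X)$ an isomorphism (such pairs exist by Theorem \ref{theomain}), the construction above would produce a universal codimension $2$ cycle on $X$ from any universal $0$-cycle on $M$; hence no such universal $0$-cycle on $M$ can exist.

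The only real content is bookkeeping: tracking the correspondence composition and the pushforward $(\alpha,\mathrm{Id}_X)_*$ carefully enough to extract the equality $\Phi_{Z_J}=\mathrm{Id}_{J^3(X)}$. Beyond this purely formal verification, using the two identities $\mathrm{alb}_M\circ Z_{A*}=\mathrm{Id}_{\mathrm{Alb}(M)}$ and $\Phi_X^2\circ Z_{M*}=\Phi_{Z_M}\circ\mathrm{alb}_M$, there is no serious obstacle.
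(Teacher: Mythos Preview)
Your argument is correct and matches the paper's own proof: both form the composition $Z_M\circ Z_{\mathrm{univ},M}\in\mathrm{CH}^2(\mathrm{Alb}(M)\times X)$ and observe that, since $\Phi_{Z_M}:\mathrm{Alb}(M)\to J^3(X)$ is an isomorphism, this cycle is universal for $X$. The paper leaves the identification of $\mathrm{Alb}(M)$ with $J^3(X)$ implicit, while you spell out the pushforward $(\alpha,\mathrm{Id}_X)_*$ and the verification $\Phi_{Z_J}=\mathrm{Id}_{J^3(X)}$; this is only a difference in level of detail, not in substance.
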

\begin{rema}{\rm Whether the converse  holds for an adequate choice of pair $(M,Z_M)$  is related to the question whether we can choose  $(M,Z_M)$ universally generating, that we are going to discuss below.}
\end{rema}
\begin{proof}[Proof of Corollary \ref{coroplustard}] The proof is immediate  since a universal $0$-cycle $$Z_{{\rm univ},M}\in{\rm CH}^{{\rm dim}\,M}({\rm Alb}(M)\times M)$$ gives rise to a codimension $2$-cycle
$$Z_{{\rm univ},X}:=Z_M\circ Z_{{\rm univ},M}\in{\rm CH}^{2}({\rm Alb}(M)\times X)$$
which is universal for $X$ using the fact that $\Phi_{Z_M}$ induces an isomorphism between
${\rm Alb}(M)$ and $J^3(X)$.
\end{proof}

We  now prove  Corollary \ref{corosursdir} stated in the introduction.
\begin{proof}[Proof of Corollary  \ref{corosursdir}]   Let $M_0$ be a smooth projective variety not admitting a universal $0$-cycle. Consider the variety
$$M=M_0\times {\rm Alb}(M_0).$$
As ${\rm Alb}(M)={\rm Alb}(M_0)\oplus {\rm Alb}(M_0)$, it contains the abelian subvariety
$\Delta={\rm Diag}({\rm Alb}(M_0))\subset  {\rm Alb}(M_0)\oplus {\rm Alb}(M_0)$ which is obviously a direct summand of ${\rm Alb}(M)$  since
$$ ({\rm Alb}(M_0)\times \{0\})\oplus  \Delta= {\rm Alb}(M).$$
We claim that there do not exist  a smooth projective variety $N$ and a pair of correspondences
$\Gamma_1\in{\rm CH}^d(N\times M)$, $\Gamma_2\in {\rm CH}^{d'}(M\times N)$, where $d={\rm dim}\,M$, $d'={\rm dim}\,N$,  such that
$$\Gamma_{2*}\circ  \Gamma_{1*}=Id_{{\rm Alb}(N)}$$
and ${\rm Im}\,(\Gamma_{1*}: {\rm Alb}(N)\rightarrow {\rm Alb}(M))=\Delta$.  Indeed, denote by $i$ the inclusion ${\rm Alb}(M_0)= \{m_0\}\times  {\rm Alb}(M_0)\hookrightarrow M$. Assume $N,\,\Gamma_1,\,\Gamma_2$ as above exist. Then
consider the correspondence
$$ \Gamma:=pr_1\circ \Gamma_1\circ \Gamma_2\circ i\in {\rm CH}^d({\rm Alb}(M_0)\times M_0),\,\,d={\rm dim}\,M_0.$$
Then  $\Gamma_*:{\rm Alb}\,M_0\rightarrow  {\rm Alb}\,M_0$ is the identity, which provides a universal $0$-cycle for $M_0$ and is a contradiction.
\end{proof}

To compare various notions of representability for $J^3(X)$, note  that, if $X$ admits a universal codimension $2$ cycle, then we can take for $M$ in Theorem \ref{theomain}  the intermediate Jacobian $J^3(X)$ itself, and for $Z_M$ any universal codimension $2$ cycle. Thus the existence of a universal codimension $2$ cycle is a stronger statement than Theorem \ref{theomain}, but it is too strong and not satisfied in general.
This  shows that Theorem \ref{theomain} is almost optimal.
However, it
 still leaves open  a slightly stronger statement, coming from  Shen's notion of universal generation \cite{shen}, namely
 \begin{Defi} \label{defimshen} (M. Shen \cite{shen}.) A codimension $k$ cycle $Z_M\in{\rm CH}^k(M\times X)$, where $M$ is smooth projective of dimension $d$,  is universally generating if it  has the property that for any smooth projective variety $Y$ and codimension $k$ cycle
 $Z_Y\in{\rm CH}^k(Y\times X)$, there exists a codimension $d$ cycle $W_Y\in {\rm CH}^d(M\times Y)$ such that
 $$ Z_{Y*}=Z_{M*}\circ W_{Y*}:{\rm CH}_0(Y)_{\rm hom}\rightarrow {\rm CH}^k(X)_{\rm alg}.$$
 \end{Defi}
 We now consider  the case where $k=2$ and $X$ is rationally connected, so that  the Abel-Jacobi map
 $\Phi_{X}: {\rm CH}^2(X)_{\rm alg}\rightarrow J^3(X)$
 is an isomorphism by Theorem \ref{blochsri}.  An easy consequence of Theorem \ref{theomain} is the following result.
 \begin{coro}\label{letrivialdirfac} Let $X$ be a rationally connected threefold. If $X$ admits  a universally generating family $(M,Z_M)$, then $J^3(X)$ is  a direct summand of ${\rm Alb}(M)$.
 \end{coro}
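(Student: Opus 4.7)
The plan is to feed a variety produced by Theorem \ref{theomain} into the universal generation property of $(M, Z_M)$, translate the resulting factorization from Chow groups to Albanese varieties, and then invoke the fact that a surjection of abelian varieties with a section splits.

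First I would apply Theorem \ref{theomain} to the rationally connected threefold $X$ to obtain a smooth projective variety $M'$ and a cycle $Z_{M'} \in {\rm CH}^2(M' \times X)$ such that
$$\Phi_{Z_{M'}}: {\rm Alb}(M') \longrightarrow J^3(X)$$
is an isomorphism. Next I would use that $(M, Z_M)$ is universally generating (Definition \ref{defimshen}) applied to $Y := M'$ and $Z_Y := Z_{M'}$: this yields a correspondence $W \in {\rm CH}^{\dim M}(M \times M')$ such that
$$Z_{M'*} = Z_{M*} \circ W_* : {\rm CH}_0(M')_{\rm hom} \longrightarrow {\rm CH}^2(X)_{\rm alg}.$$

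Now I would transfer this identity to the level of Albanese varieties. Composing both sides with the isomorphism $\Phi_X^2 : {\rm CH}^2(X)_{\rm alg} \xrightarrow{\sim} J^3(X)$ provided by Theorem \ref{blochsri}, and using the standard compatibilities $\Phi_X^2 \circ Z_{M'*} = \Phi_{Z_{M'}} \circ {\rm alb}_{M'}$, $\Phi_X^2 \circ Z_{M*} = \Phi_{Z_M} \circ {\rm alb}_M$, and ${\rm alb}_M \circ W_* = \widetilde{W} \circ {\rm alb}_{M'}$ for the induced morphism $\widetilde{W}: {\rm Alb}(M') \to {\rm Alb}(M)$, I would deduce
$$\Phi_{Z_{M'}} = \Phi_{Z_M} \circ \widetilde{W} : {\rm Alb}(M') \longrightarrow J^3(X).$$
Since $\Phi_{Z_{M'}}$ is an isomorphism, the morphism $\sigma := \widetilde{W} \circ \Phi_{Z_{M'}}^{-1} : J^3(X) \to {\rm Alb}(M)$ satisfies $\Phi_{Z_M} \circ \sigma = {\rm id}_{J^3(X)}$.

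To conclude, I would note that any surjective homomorphism of abelian varieties admitting a section splits: writing $K := \ker(\Phi_{Z_M})^0$, the map $K \oplus J^3(X) \to {\rm Alb}(M)$, $(k, j) \mapsto k + \sigma(j)$, is an isomorphism. Thus $J^3(X)$ is a direct summand of ${\rm Alb}(M)$. The only nonroutine point is the compatibility of the Chow-level factorization with Abel--Jacobi, which however follows immediately from the definitions once Theorem \ref{blochsri} is invoked; no real obstacle arises.
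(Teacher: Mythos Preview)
Your argument is correct and follows essentially the same route as the paper: apply Theorem \ref{theomain} to obtain $(M',Z_{M'})$ with $\Phi_{Z_{M'}}$ an isomorphism, use universal generation of $(M,Z_M)$ to factor $\Phi_{Z_{M'}}=\Phi_{Z_M}\circ \widetilde{W}$, and conclude that $\Phi_{Z_M}$ retracts ${\rm Alb}(M)$ onto $J^3(X)$. The paper's proof is just a slightly more compressed version of the same reasoning (and your use of the connected component $K^0$ is harmless but unnecessary, since the existence of the section forces $\ker\Phi_{Z_M}$ itself to be connected).
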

 \begin{proof} We know by Theorem \ref{theomain} that there also exist  a smooth projective variety $N$ and a  codimension $2$ cycle $Z_N\in{\rm CH}^2(N\times X)$ such that
 $\Phi_{Z_N}: {\rm Alb}(N)\rightarrow J^3(X)$ is an isomorphism. By the universal generation property, there exists a cycle $\Gamma\in{\rm CH}^d(N\times M)$, $d={\rm dim}\,M$, such that
 $$\Phi_{Z_N}=\Phi_{Z_M}\circ \Gamma_*:{\rm Alb}(N)\rightarrow  J^3(X).$$
 It thus follows that $\Gamma_*({\rm Alb}(N))\subset  {\rm Alb}(M)$ is isomorphic to $J^3(X)$ and $\Phi_{Z_M}$ then provides a retraction of   ${\rm Alb}(M)$  onto  $J^3(X)$, making it into  a direct summand.
 \end{proof}
In fact,  we can prove a better statement:
\begin{theo}  \label{theoshenunivalbiso} Assume a rationally connected threefold $X$ admits a universally generating family $(N,Z_N)$. Then $X$ admits   a universally generating family $(M,Z_M)$ such that
$\Phi_{Z_M}: {\rm Alb}(M)\rightarrow J^3(X)$ is an isomorphism.
\end{theo}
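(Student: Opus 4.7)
The approach is to combine the given universally generating pair $(N,Z_N)$ with Theorem \ref{theomain} to identify $J^3(X)$ as an abelian subvariety of ${\rm Alb}(N)$, then use Lemma \ref{levenantdeconiveau} to cut out a smooth projective variety $M$ whose Albanese realizes that subvariety, and finally transfer universal generation from $N$ to $M$.

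\medskip

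First, apply Theorem \ref{theomain} to pick $(M_0,Z_{M_0})$ with $\Phi_{Z_{M_0}}:{\rm Alb}(M_0)\xrightarrow{\sim} J^3(X)$. Universal generation of $(N,Z_N)$ applied to $(M_0,Z_{M_0})$ produces $V\in{\rm CH}^{\dim N}(N\times M_0)$ with $Z_{M_0*}=Z_{N*}\circ V_*$, so that $\Phi_{Z_N}\circ V_* = \Phi_{Z_{M_0}}$ at the Albanese level. The image $A := V_*({\rm Alb}(M_0))\subset{\rm Alb}(N)$ is then an abelian subvariety mapping isomorphically to $J^3(X)$ via $\Phi_{Z_N}$, with ${\rm Alb}(N) = A\oplus B$ and $B := \ker\Phi_{Z_N}$, as in Corollary \ref{letrivialdirfac}. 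Lemma \ref{levenantdeconiveau} applied to $A\hookrightarrow{\rm Alb}(N)$ then produces a smooth projective $M$ and a $0$-correspondence $\Gamma\in{\rm CH}^{\dim N}(M\times N)$ with $\Gamma_*:{\rm Alb}(M)\xrightarrow{\sim} A\hookrightarrow{\rm Alb}(N)$. Setting $Z_M := Z_N\circ\Gamma\in{\rm CH}^2(M\times X)$, one gets $\Phi_{Z_M} = \Phi_{Z_N}\circ\Gamma_*$, the composition of two isomorphisms ${\rm Alb}(M)\xrightarrow{\sim} A\xrightarrow{\sim} J^3(X)$, hence itself an isomorphism.

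\medskip

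To show that $(M,Z_M)$ is universally generating, let $(Y,Z_Y)$ be an arbitrary pair and obtain from the universal generation of $(N,Z_N)$ a cycle $V_Y\in{\rm CH}^{\dim N}(N\times Y)$ with $\Phi_{Z_N}\circ V_{Y*} = \Phi_{Z_Y}$. The plan is to produce a single auxiliary cycle $\Omega\in{\rm CH}^{\dim M}(M\times N)$ with $\Phi_{Z_M}\circ\Omega_* = \Phi_{Z_N}$. Granting this, the composition $W_Y := {\rm pr}_{MY*}({\rm pr}_{MN}^*\Omega\cdot{\rm pr}_{NY}^*V_Y)\in{\rm CH}^{\dim M}(M\times Y)$ has the correct codimension $\dim M + \dim N - \dim N = \dim M$ and induces $W_{Y*} = \Omega_*\circ V_{Y*} = \Phi_{Z_M}^{-1}\circ\Phi_{Z_Y}$ at the Albanese level, whence $\Phi_{Z_M}\circ W_{Y*} = \Phi_{Z_Y}$, which is the required identity.

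\medskip

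The hard part is the construction of $\Omega$. At the Albanese level the required morphism $\Omega_* = \Phi_{Z_M}^{-1}\circ\Phi_{Z_N}$ equals $\Gamma_*^{-1}\circ p$ where $p:{\rm Alb}(N)\to A$ is the direct-summand projection, and Proposition \ref{corosursdir} shows that such projections are not in general realized by cycle-theoretic projectors. The strategy is to exploit the specific algebraic origin of the decomposition ${\rm Alb}(N) = A\oplus B$: both the surjection $\Phi_{Z_N}$ (from $Z_N$) and the section $V_*$ (from $V$) are realized by explicit cycles, and combining them with $Z_{M_0}$ and with Bloch--Srinivas (Theorem \ref{blochsri})---which identifies ${\rm CH}^2(X)_{\rm alg}$ with $J^3(X)$ for rationally connected threefolds---one should construct $\Omega$ as an explicit iterated composition of correspondences, circumventing the abstract obstruction of Proposition \ref{corosursdir}.
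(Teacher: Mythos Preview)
Your reduction in the second paragraph is correct: once you have $\Omega\in{\rm CH}^{\dim M}(M\times N)$ with $\Phi_{Z_M}\circ\Omega_*=\Phi_{Z_N}$, setting $W_Y:=\Omega\circ V_Y$ does witness universal generation of $(M,Z_M)$, and the use of Theorem~\ref{blochsri} to pass between equalities in ${\rm CH}^2(X)_{\rm alg}$ and equalities of Abel--Jacobi maps is legitimate.

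But the last paragraph is a genuine gap, not a technicality. Lemma~\ref{levenantdeconiveau} hands you a $0$-correspondence $\Gamma$ going \emph{from $M$ into $N$}; to transfer universal generation you need one going \emph{from $N$ into $M$}. Concretely, your $\Omega_*$ must realize $\Gamma_*^{-1}\circ p$ where $p:{\rm Alb}(N)\twoheadrightarrow A$ is the direct-summand projection. None of the cycles at hand---$V$, $\Gamma$, $Z_N$, $Z_{M_0}$---point out of $N$ towards a variety whose Albanese is $A$; transposing $V$ or $\Gamma$ lands in Chow groups of the wrong dimension, or (after equalizing dimensions) produces the \emph{adjoint} of $V_*$ or $\Gamma_*$ with respect to polarizations, not a left inverse. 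Your own invocation of Proposition~\ref{corosursdir} is apt: a direct-sum decomposition of an Albanese variety need not be induced by a correspondence, and nothing in the ``algebraic origin'' of your particular $A,B$ circumvents this. The claim that Bloch--Srinivas lets one assemble $\Omega$ as an iterated composition is not substantiated, and there is no evident way to do it.

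The paper avoids this problem by constructing $M$ differently. Instead of applying Lemma~\ref{levenantdeconiveau} to $A\subset{\rm Alb}(N)$, it first produces (Proposition~\ref{prop1}, which reruns the argument of Theorem~\ref{theonewform}) a variety $N'$ together with a \emph{rational map} $\phi:N\dashrightarrow N'$ satisfying $\ker\phi_*=\ker\Phi_{Z_N}$, and then (Proposition~\ref{pro2}) builds $M=N''$ via a relative Picard construction that again comes equipped with a rational map $\psi:N\dashrightarrow N''$ and a correspondence $\gamma$ from $N''$ to $N'$ with $\gamma_*$ injective onto $A$. The graph of $\psi$ \emph{is} the needed correspondence from $N$ to $M$: with $Z_M:=Z_{N'}\circ\gamma$ one has $Z_{M*}\circ\psi_*=Z_{N'*}\circ\gamma_*\circ\psi_*=Z_{N'*}\circ\phi_*=Z_{N*}$, so $W_Y:=\psi\circ V_Y$ works for any $(Y,Z_Y)$. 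In short, the paper builds $M$ so that a map $N\to M$ is part of the construction, whereas Lemma~\ref{levenantdeconiveau} only supplies the map in the opposite direction.
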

\begin{proof} The proof will be done in several steps.

{\bf Step 1}. We first prove by mimicking the proof of Theorem \ref{theomain} the following
\begin{prop} \label{prop1}  Let $(N,Z_N)$ be a pair consisting of a  smooth projective variety  $N$ and a codimension $2$ cycle
$Z_N\in{\rm CH}^2(N\times X)$.  Then there exist a smooth projective variety $N'$, a codimension $2$ cycle $Z_{N'}\in {\rm CH}^2(N'\times X)$, and a rational map
$$\phi:N\dashrightarrow N'$$
with the following properties.

(1) $\Phi_{Z_N}=\Phi_{Z_{N'}}\circ \phi: N\dashrightarrow J^3(X)$.

(2) ${\rm Ker}\,(\Phi_{Z_{N}}:{\rm Alb}(N)\rightarrow J^3(X))\subset  {\rm Ker}\,(\phi_{*}:{\rm Alb}(N)\rightarrow {\rm Alb}(N'))$.
\end{prop}
Note  that (1) and (2) in fact  imply that
$${\rm Ker}\,(\Phi_{Z_{N}}:{\rm Alb}(N)\rightarrow J^3(X))=  {\rm Ker}\,(\phi_{*}:{\rm Alb}(N)\rightarrow {\rm Alb}(N')).$$

{\bf Step 2}. Assume now that $(N,Z_N)$ is universally generating. Then for any smooth projective variety $Y$ and correspondence $\Gamma\in{\rm CH}^2(Y\times X)$, there exists a correspondence
$\Gamma'\in {\rm CH}^{n}(Y\times N)$, $n={\rm dim}\,N$, such that
$$ \Phi_\Gamma=\Phi_{Z_N}\circ \Gamma'_*:{\rm Alb}(Y)\rightarrow J^3(X).$$
Let $\Gamma'':=\phi\circ \Gamma' \in {\rm CH}^{n'}(Y\times N')$,  $n'={\rm dim}\,N'$, where $N',\,\phi$ are as in Step 1. Then by (1) above, we have
$$\Phi_\Gamma=\Phi_{Z_N}\circ \Gamma'_*=\Phi_{Z_{N'}}\circ\phi_*\circ \Gamma'_*=\Phi_{Z_{N'}}\circ \Gamma''_*:{\rm Alb}(Y)\rightarrow J^3(X).$$
It follows that $(N',Z_{N'})$ is also  universally generating. Unfortunately, we can have ${\rm Alb}(N')\not=J^3(X)$ so we have to modify the construction in an adequate way. We observe that, as proved in Corollary \ref{letrivialdirfac}, the image
of $\phi_*: {\rm Alb}(N)\rightarrow  {\rm Alb}(N')$ is a direct summand $A$ of $ {\rm Alb}(N')$ which is isomorphic to $J^3(X)$.
By definition of $A$, the composite
${\rm alb}_{N'}\circ \phi:N\rightarrow   {\rm Alb}(N')$ takes value in $A$. Our next proposition is the following complement to Lemma \ref{levenantdeconiveau}.
\begin{prop} \label{pro2}   Let $N,\,N'$ be two smooth projective varieties and $\phi:N\rightarrow N'$ be a morphism (or rational map).
Let $A:={\rm Im}\,(\phi_*:{\rm Alb}(N)\rightarrow  {\rm Alb}(N'))\subset  {\rm Alb}(N')$. Then there exist  a smooth projective variety $N''$, a rational map
$\psi: N\dashrightarrow N''$, and  a correspondence
$\gamma\in {\rm CH}^{n'}(N''\times N')$ with the following properties:

(3) $\gamma_*\circ \psi_*=\phi_*:   {\rm Alb}(N)\rightarrow  {\rm Alb}(N')$.

(4)  $\gamma_*:{\rm Alb}(N'')\rightarrow  {\rm Alb}(N')$ is injective with image $A$.

\end{prop}
Let now $M=N'', \,Z_M:=  Z_{N'}\circ \gamma\in{\rm CH}^2(M\times X)$. By Proposition \ref{prop1} (1)  and Proposition \ref{pro2} (3), we get by the same argument as above that  $(M,Z_M)$ is universally generating. Using  Proposition \ref{pro2} (4), we find that $\Phi_{Z_M}:{\rm Alb}(M)\rightarrow J^3(X)$ is injective, hence an isomorphism. This completes the proof of Theorem \ref{theoshenunivalbiso} admitting Proposition   \ref{pro2}.
\end{proof}
\begin{proof}[Proof of Proposition \ref{pro2}]  Let $n'={\rm dim}\,N'$ and let $L$ be  a very ample line bundle  on $N'$. A choice of $n'$ general sections of $L$ gives us  a dominant rational map
$f:N'\dashrightarrow\mathbb{P}^{n'-1}$ whose general fiber $C_t$, $t\in \mathbb{P}^{n'-1}$,  is a smooth complete intersection of $n'-1$ hypersurfaces in $|L|$.  Let $r$ be any integer and  let   $P_{r}$ be  any smooth projective model of the relative Picard variety ${\rm Pic}^{r} (N'/\mathbb{P}^{n'-1})$. Let $x_0\in N'$ be one of the base points of the linear system. We thus have   a rational map $a_{r}:  P_{r}\dashrightarrow {\rm Alb}(N')$ (which in fact is a morphism)
given on the general point  $d_t\in  {\rm Pic}^{r} (C_t)$ by
$$ a_{r}(d_t)= {\rm alb}_{N'}(j_{t*}(d_t-rx_{0,t})),$$
where $j_t$ is the inclusion of $C_t$ in $N'$ and $x_{0,t}$ is the point $x_0$ seen as a point of $C_t$.

Let now $N'':=a_{r}^{-1} (A_s)$, where $A_s\subset {\rm Alb}(N')$ is a general translate of $A$.  We first claim that
${\rm Alb}(N'')=A$. This follows indeed from the fact that the general fiber of  $a_{r}$ is a fibration over $ \mathbb{P}^{n'-1}$ with fiber the abelian variety ${\rm Ker}\,(j_{t*}:J(C_t)\rightarrow {\rm Alb}(N'))$.
Indeed, by Lefschetz theorem on hyperplane sections, ${\rm alb}_{N'}\circ j_{t_*}: J(C_t)\rightarrow {\rm Alb}(N')$ is surjective (with connected fibers) with  kernel the abelian variety built on  the weight $1$ Hodge structure
$H_1(C_t,\mathbb{Z})_{\rm van}:={\rm Ker}\,(j_{t*}: H_1(C_t,\mathbb{Z})\rightarrow H_1(N',\mathbb{Z}))$. Furthermore the corresponding   local system of vanishing degree $1$ cohomology  defined  on a Zariski open set of $\mathbb{P}^{n'-1}$ has no nonzero global section by Deligne's invariant cycle  theorem.

Recall that $P_{r}$  is a relative Picard variety for a fibration of $N'$ into curves, hence carries a universal relative divisor $D\in{\rm CH}^1(P_r\times_{\mathbb{P}^{n'-1}} N')$. Mapping the fibered product in the product $P_r\times N'$, this provides a correspondence $D\in {\rm CH}^{n'}(P_r\times N')$. Restricting $D$  to $N''\times N'$ gives  a correspondence  $\gamma\in {\rm CH}^{n'}(N''\times N')$.  The isomorphism  between ${\rm Alb}(N'')$ and $A\subset {\rm Alb}(N')$ described above  is the morphism $\gamma_*$  induced by the  correspondence $\gamma$.
It now  remains to construct the desired rational map $\psi$  from $N$ to $N''$ for an adequate choice of $s$. We  observe  that for two general points $x_1,\,x_2$ in $N'$, the element  ${\rm alb}(x_1-x_2)$ of ${\rm Alb}(N')$ has the property that
the set of points $m{\rm alb}_{N'}(x_1-x_2)$ is Zariski dense in ${\rm Alb}(N')$. If  $L$ is sufficiently ample, we can thus in the above construction choose $s=m  {\rm alb}_{N'}(x_1-x_2)$ for some $m$, where  $x_1,\,x_2$ are  two base-points of our linear system, since the right hand side is a general point of $A$.
Let now  $D$ be the degree of $L_t:=L_{\mid C_t}$. For an integer  $k$ large enough, we set $r=kD+1$ and we  get a rational
embedding
$$f: N'\dashrightarrow P_{r},
$$

$$x\mapsto x+k L_t +m(x_{1,t}-x_{2,t})_t\in {\rm Pic}^r(C_t) ,\,\,t:=f(x),$$
where $m,\,x_1,\,x_2$ are as above and  $x_{i,t}$ for $i=1,\,2$ is the point  $x_i$ seen as a point of the curve $C_t$.

The composite map
$$\psi=f\circ \phi$$
is well-defined  for a general choice of linear system, and we observe that
$\psi(N)\subset  N''=a_r^{-1}(A_s)$ since $A=\phi({\rm Alb}(M))$.
\end{proof}
  We conclude this section with the following  questions.

 \begin{question}  \label{questioninteressante} Does any rationally connected threefold admit a  universally generating family?
 \end{question}

   \begin{question} \label{questioninteressante2}  Does there exist a  rationally connected threefold  $X$ which admits    a universally generating family of   $1$-cycles, but does not admit a  universal codimension 2 cycle?
   \end{question}
   By \cite{voisinuniv}, an affirmative answer to Question \ref{questioninteressante}   implies an affirmative answer to Question \ref{questioninteressante2}.
\subsection{The case of cubic threefolds and quartic double solids}
Let $X$ be a smooth cubic threefold. One has ${\rm dim}\,J^3(X)=5$  and it is proved in \cite{voisinJAG} that if the minimal class $\frac{\theta^4}{4!}\in H^8(J^3(X),\mathbb{Z})$ is algebraic, then $X$ admits a  universal codimension $2$ cycle. This  implication is very specific to cubic threefolds but it only  uses  the fact that, by \cite{marku}, for any smooth  cubic threefold $X$, there exist  a smooth projective variety $M$, and a codimension $2$ cycle $Z\in{\rm CH}^2(M\times X)$ such that
$\Phi_Z:M\rightarrow J^3(X)$ is surjective  with rationally connected fibers.

Next it is proved in \cite{voisinJAG} that  the two conditions together (the algebraicity of the minimal class and the existence of a universal codimension $2$ cycle) imply the existence of a cohomological decomposition of the diagonal for a rationally connected threefold with no torsion in $H^3(X,\mathbb{Z})$.    Hence, in the case of the cubic threefold,  the algebraicity of the minimal class implies the existence of a cohomological decomposition of the diagonal. We used in turn in \cite{voisinuniv} these arguments to show that  a general desingularized quartic double solid with $7 $ nodes does not admit a parameterization
$$\Phi_Z: M\rightarrow J^3(X)$$
of its intermediate Jacobian with rationally connected fibers. Indeed, its minimal class is algebraic since it has dimension $3$, it has no torsion in $H^3$,  and it does not admit a cohomological decomposition of the diagonal by the main theorem of \cite{voisinuniv}.

 Coming back to the case of the cubic threefold $X$, it is even proved in  \cite{voisinjems} that if the minimal class $\frac{\theta^4}{4!}\in H^8(J^3(X),\mathbb{Z})$ is algebraic, then $X$ admits a Chow decomposition of the diagonal.
Conversely, the algebraicity of  the minimal class  of $J^3(X)$  is  implied by the existence of a cohomological decomposition of the diagonal of a rationally connected threefold $X$ (see \cite{voisinjems}), hence we conclude that for a cubic threefold, the algebraicity of the minimal class is equivalent to the existence of a  cohomological  (and even Chow-theoretic) decomposition of the diagonal.  Despite these results, the following question  remains open:

\begin{question} Let $X$ be a general cubic threefold. Does $X$ admit a universal codimension $2$ cycle?
\end{question}

Turning to the  property of the existence of a  universally generating family, Mingmin Shen proves the following.
\begin{theo}\label{theounigen}  (M. Shen, \cite{shen}.) Let $X$ be a smooth cubic threefold and $\Sigma_X$ the surface of lines in $X$. Then the universal family of lines in $X$ parameterized by $\Sigma_X$ is universally generating.
\end{theo}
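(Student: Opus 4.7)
The plan is to use the Clemens-Griffiths isomorphism $\Phi_L\colon{\rm Alb}(\Sigma_X)\xrightarrow{\sim}J^3(X)$ together with the Bloch-Srinivas isomorphism $\Phi_X^2\colon{\rm CH}^2(X)_{\rm alg}\xrightarrow{\sim}J^3(X)$ of Theorem~\ref{blochsri} to reduce the universal generation property for $(\Sigma_X,L)$ to the problem of realizing an arbitrary morphism of Albanese varieties by a codimension-$2$ correspondence on $\Sigma_X\times Y$, and then to construct such a correspondence using the fact that the Fano surface generates $J^3(X)$ as an abelian group.

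First I would make the reduction explicit. For any smooth projective $Y$ and any cycle $Z_Y\in{\rm CH}^2(Y\times X)$, Theorem~\ref{blochsri} shows that $Z_{Y*}=(\Phi_X^2)^{-1}\circ\Phi_{Z_Y}\circ{\rm alb}_Y$ factors through ${\rm Alb}(Y)$; the same holds for $L_*\circ W_{Y*}$ for any $W_Y\in{\rm CH}^2(\Sigma_X\times Y)$. The required equation $Z_{Y*}=L_*\circ W_{Y*}$ on ${\rm CH}_0(Y)_{\rm hom}$ is therefore equivalent to the equality of abelian-variety morphisms $\Phi_{Z_Y}=\Phi_L\circ W_{Y,{\rm alb}*}\colon{\rm Alb}(Y)\to J^3(X)$. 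Since $\Phi_L$ is an isomorphism, this reduces the theorem to constructing, for every $Y$ and $Z_Y$, a codimension-$2$ cycle $W_Y\in{\rm CH}^2(\Sigma_X\times Y)$ whose induced Albanese morphism equals $f:=\Phi_L^{-1}\circ\Phi_{Z_Y}\colon{\rm Alb}(Y)\to{\rm Alb}(\Sigma_X)$.

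Second I would construct $W_Y$ using the Albanese embedding $\Sigma_X\hookrightarrow{\rm Alb}(\Sigma_X)$. Because $\Sigma_X$ generates $J^3(X)$ as a group, the iterated-difference map $\phi\colon\Sigma_X^{2N}\to{\rm Alb}(\Sigma_X)$, $(s_1^+,\ldots,s_N^-)\mapsto\sum_i({\rm alb}(s_i^+)-{\rm alb}(s_i^-))$, is surjective for $N$ large enough. Assuming one can find a $5$-dimensional subvariety $M'\subset\Sigma_X^{2N}$ on which $\phi|_{M'}$ is birational onto $J^3(X)$, one obtains a rational section $\sigma\colon J^3(X)\dashrightarrow M'$, and composing with $f\circ{\rm alb}_Y\colon Y\to J^3(X)$ yields rational maps $s_i^{\pm}\colon Y\dashrightarrow\Sigma_X$; the signed sum of their graph closures
$$W_Y:=\sum_{i=1}^N\bigl(\Gamma_{s_i^+}-\Gamma_{s_i^-}\bigr)\in{\rm CH}^2(\Sigma_X\times Y)$$
then satisfies $W_{Y,{\rm alb}*}=f$ by construction, since for a general $y\in Y$ one computes ${\rm alb}_{\Sigma_X}\circ W_{Y*}(y-y_0)=\phi\circ\sigma\circ f\circ{\rm alb}_Y(y-y_0)=f\circ{\rm alb}_Y(y-y_0)$.

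The main obstacle is arranging that $\phi|_{M'}$ is \emph{birational} onto $J^3(X)$ rather than merely generically finite of some degree $d>1$: the latter would only produce the cycle realizing $d\cdot f$, yielding universal generation with $\mathbb{Q}$-coefficients only. Securing $d=1$ integrally requires a finer argument specific to the Fano surface $\Sigma_X\hookrightarrow J^3(X)$ of a smooth cubic threefold, using its favourable intersection-theoretic properties inside $J^3(X)$. Once this is achieved, the equation $Z_{Y*}=L_*\circ W_{Y*}$ on ${\rm CH}_0(Y)_{\rm hom}\to{\rm CH}^2(X)_{\rm alg}$ is automatic from the Albanese factorization established in the reduction step, and the proof is complete.
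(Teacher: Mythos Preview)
Your reduction in the first step is correct, but the construction in the second step has a genuine gap that cannot be filled by ``favourable intersection-theoretic properties'' of $\Sigma_X$. Finding a $5$-dimensional $M'\subset\Sigma_X^{2N}$ with $\phi|_{M'}$ \emph{birational} onto $J^3(X)$ is exactly the same as constructing a universal $0$-cycle on $\Sigma_X$ parameterized by ${\rm Alb}(\Sigma_X)$: the rational section $\sigma$ followed by the signed sum of the $2N$ projections gives a cycle $W\in{\rm CH}^2(J^3(X)\times\Sigma_X)$ with ${\rm alb}_{\Sigma_X}\circ W_*={\rm id}$. By the argument of Corollary~\ref{coroplustard}, this in turn produces a universal codimension~$2$ cycle on $X$. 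But the existence of such a cycle for a general cubic threefold is an open problem---it is precisely the Question stated in the paper immediately after Theorem~\ref{theounigen}. So your proposed route, if it succeeded, would prove a strictly stronger statement than Shen's theorem, one that is currently unresolved. Put differently, your linear construction only uses the morphism $f=\Phi_L^{-1}\circ\Phi_{Z_Y}$ and forgets the cycle $Z_Y$ itself; at the level of ${\rm Hom}({\rm Alb}(Y),J^3(X))$ there is no way to distinguish the maps coming from actual cycles, and realizing \emph{all} of them is equivalent to the open problem.

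Shen's argument, as the paper indicates, is of an entirely different nature: it is \emph{nonlinear} in $Z_Y$. To a $1$-cycle $Z$ on $X$ one associates the $0$-cycle on $\Sigma_X$ of lines bisecant to $Z$, and then computes the class in ${\rm CH}_1(X)$ of the $1$-cycle swept out by these bisecant lines. This construction produces the correspondence $W_Y$ directly from the geometry of $Z_Y$, without ever passing through ${\rm Alb}(\Sigma_X)$, and in particular without assuming or constructing a universal $0$-cycle. This is why Shen's theorem can hold for all smooth cubic threefolds while the existence of a universal codimension~$2$ cycle remains open.
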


The argument is quite interesting by its nonlinear character. Mingmin Shen associates to a $1$-cycle $Z\in {\rm CH}^2(X)$, the $0$-cycle of bisecant lines  to $Z$ and computes the class of the $1$-cycle of these lines in $X$.

In the case of a quartic double solid $X$, although we know that $X$ does not admit in general a universal codimension $2$ cycle, we do not know if $X$ admits a universally generating family (see Question \ref{questioninteressante}). In particular,  the following question seems to be  open.
\begin{question} Let $X$ be a smooth quartic double solid. Is the universal family of lines in $X$ universally generating?
\end{question}


\begin{thebibliography}{99}
\bibitem{blochsrinivas}  S. Bloch, V. Srinivas. Remarks on
 correspondences and algebraic cycles,
 Amer. J. of Math. 105 (1983) 1235-1253.
\bibitem{casalaina}  J. Achter, S. Casalaina-Martin, Ch. Vial. On descending cohomology geometrically.
Compos. Math. 153 (2017), no. 7, 1446-1478.
\bibitem{ceresaverra} G. Ceresa, A.  Verra.  The Abel-Jacobi isomorphism for the sextic double solid. Pacific J. Math. 124 (1986), no. 1, 85-105.
    \bibitem{clemens1} H. Clemens.  Degeneration techniques in the study of threefolds. Algebraic threefolds (Varenna, 1981), pp. 93-154,
Lecture Notes in Math., 947, Springer, Berlin-New York, 1982.
    \bibitem{clemens2} H. Clemens.  Some results about Abel-Jacobi mappings, in {\it Transcendental topics in Algebraic
Geometry}, Annals of Math. Studies,  study 106, 289-304,   (1983).
\bibitem{clemens} H. Clemens. Homological equivalence, modulo algebraic equivalence, is not finitely generated,
Inst. Hautes \'{E}tudes Sci. Publ. Math. 58 (1983), 19-38.
\bibitem{clemensgriffiths} H. Clemens, P. Griffiths. The intermediate Jacobian of the cubic threefold, Ann. of
Math. 95 (1972), 281-356.

\bibitem{collino1} A. Collino. The Abel-Jacobi isomorphism for the cubic fivefold. Pacific J. Math. 122 (1986), no. 1, 43-55.
\bibitem{collino2}  A. Collino, J. P. Murre, G.  Welters. On the family of conics lying on a quartic threefold. Rend. Sem. Mat. Univ. Politec. Torino 38 (1980), no. 1, 151-181.
  \bibitem{fulton} W. Fulton.  {\it  Intersection theory}. Ergebnisse der Mathematik und ihrer Grenzgebiete (3) [Results in Mathematics and Related Areas (3)], 2. Springer-Verlag, Berlin, (1984).
    \bibitem{griffiths} Ph. Griffiths. On the periods of certain rational integrals. I, II. Ann. of Math.
(2) 90 (1969), 460-495; ibid. (2) 90 (1969) 496-541.
    \bibitem{hahi}  R. Hartshorne, A. Hirschowitz. Smoothing algebraic space curves. Algebraic geometry,
Sitges (Barcelona), 1983, 98-131, Lecture Notes in Math., 1124, Springer, Berlin, 1985.
    \bibitem{iliev} A. Iliev, D. Markushevich. The Abel-Jacobi map for a cubic threefold and periods
of Fano threefolds of degree 14, Documenta Mathematica, 5, 23-47 (2000).
\bibitem{komimo}  J. Koll\'ar, Y. Miyaoka, S. Mori. Rationally connected varieties. J. Algebraic Geom. 1
no. 3, 429-448 (1992).
    \bibitem{letizia} M. Letizia.
The Abel-Jacobi mapping for the quartic threefold.
Invent. Math. 75 (1984), no. 3, 477-492.
\bibitem{marku}   D. Markushevich,  A. Tikhomirov. The Abel-Jacobi map of a moduli component
of vector bundles on the cubic threefold, J. Algebraic Geometry 10 (2001), 37-62.
\bibitem{mumford} D. Mumford.  Rational equivalence of 0-cycles on surfaces. J. Math. Kyoto Univ. 9
(1968) 195-204.
\bibitem{murre}  J. P. Murre. Applications of algebraic K-theory to the theory of algebraic cycles, in Proc.
Conf. Algebraic Geometry, Sitjes 1983, LNM 1124 (1985), 216-261, Springer-Verlag.
\bibitem{serre} J.-P. Serre. Morphismes universels et vari\'{e}t\'{e}s d'Albanese, in {\it Vari\'{e}t\'{e}s de Picard}, S\'{e}minaire Chevalley, E.N.S., 1958/59 (1960).
\bibitem{shen} M. Shen. Rationality, universal generation and the integral Hodge conjecture. Geom. Topol. 23 (2019), no. 6, 2861-2898.
\bibitem{suzuki} F. Suzuki. A remark on a 3-fold constructed by Colliot-Th\'el\`ene and Voisin.
Math. Res. Lett. 27 (2020), no. 1, 301-317.
  \bibitem{voisinuniv} C. Voisin. Unirational threefolds with no universal codimension 2 cycle, Invent math.
Vol. 201, Issue 1 (2015), 207-237.
\bibitem{voisinconiveau} C. Voisin.  On the coniveau of rationally connected threefolds, to appear in Geometry and Topology
\bibitem{voisinJAG} C. Voisin. Abel-Jacobi map, integral Hodge classes and decomposition of the diagonal. J. Algebraic Geom. 22 (2013), no. 1, 141-174.
    \bibitem{voisinjems} C. Voisin.   On the universal $CH_0$ group of cubic hypersurfaces, JEMS Volume 19, Issue 6 (2017) pp. 1619-1653.
\bibitem{walker}   M. Walker. The morphic Abel-Jacobi map, Compos. Math. 143 (2007), no. 4, 909-944.
\bibitem{welters} G. Welters. Abel-Jacobi isogenies for certain types of Fano threefolds. Mathematical
Centre Tracts, 141. Mathematisch Centrum, Amsterdam, 1981.

\end{thebibliography}
    \end{document}